\newcommand{\EE}{E}
 \newtheorem{prop}{Proposition}[section]
 \newtheorem{lemma}[prop]{Lemma}
 \newtheorem{theorem}[prop]{Theorem}
\numberwithin{equation}{section}
\newcommand{\Var}{{\rm Var \ \! }}
\newcommand{\Cov}{{\rm \ \! Cov \ \! }}
\newcommand{\real}{\mathbb R}
\newcommand*{\Scale}[2][4]{\scalebox{#1}{$#2$}}
\begin{document}

\title{\huge Supermodular ordering of binomial, Poisson and Gaussian random vectors by tree-based correlations 
} 

\author{B\"unyamin K\i z\i ldemir\footnote{bunyamin001@e.ntu.edu.sg}
 \qquad \quad
Nicolas Privault\footnote{nprivault@ntu.edu.sg}
\\
\normalsize
        Division of Mathematical Sciences\\
\normalsize
        School of Physical and Mathematical Sciences\\
\normalsize
        Nanyang Technological University\\
\normalsize
        637371 Singapore
        }

\date{\today}  

\maketitle

\vspace{-1cm}
\begin{abstract} 
 We construct a tree-based dependence structure for
 the representation of binomial, Poisson and Gaussian
 random vectors having a given covariance matrix, 
 using sums of independent random variables.
 This construction allows us to characterize the
 supermodular ordering of such random vectors
 via the componentwise ordering of their covariance matrices.
 Our method relies on the representation of dependent components using binary trees on the discrete $d$-dimensional hypercube $C_d$, and on M\"obius inversion techniques. In the case of Poisson random vectors this approach involves L\'evy measures on $C_d$, and it is consistent with the approximation of Poisson and multivariate Gaussian random vectors by binomial vectors.
\end{abstract} 
\noindent {\bf Key words:} Stochastic ordering; supermodular functions; binary trees, M\"obius inversion, Poisson random vectors, binomial random vectors. 
\\
{\em Mathematics Subject Classification:} 60E15; 62H20; 60E07, 05C05; 06A11. 

\baselineskip0.671cm

\section{Introduction}
 A $d$-dimensional random vector
 $X = (X_1, \ldots , X_d)$ is said
 to be dominated by another random vector
 $Y = (Y_1, \ldots , Y_d)$ in the supermodular order,
 and one writes $X \le_{\rm sm} Y$, if
\begin{equation*}
 E [ \Phi ( X ) ] \leq E [ \Phi ( Y ) ],
\end{equation*} 
 for all sufficiently integrable
 {\em supermodular} functions, i.e. for all functions 
 $\Phi : \real^d \longrightarrow \real$ such that
$$
 \Phi ( x ) + \Phi ( y )
 \leq
 \Phi ( x \wedge y ) + \Phi ( x \vee y ),
 \quad
 x , y \in \real^d,
$$
 where the maximum $\vee$ and the minimum $\wedge$
 are defined with respect to the componentwise order
 of $x , y \in \real^d$. 
 The supermodular stochastic ordering is used in particular to
 capture a preference for greater inter-dependence in economic variables.
\\
 
In the case where $X$ and $Y$ are multivariate Gaussian vectors,
supermodular ordering has been 
characterized by the componentwise
ordering of their covariance matrices 
in \cite{scarsini} Theorem~4.2, 
cf. also Theorem~3.13.5 of \cite{mullerbk}.
In \cite{kp}, a dependence
structure under which supermodular ordering
can be characterized under the same covariance condition
has been provided for Poisson random vectors, 
based on a decomposition of their L\'evy measures on the
vertices of the $d$-dimensional unit hypercube $C_d$.
\\
 
In this paper we construct a general dependence structure for
binomial and Poisson random vectors,
under which the supermodular ordering
can be characterized by the ordering of covariance matrices. 
This approach extends the results of \cite{kp} to a 
larger family of dependence structures, based on an arrangement of random
variables according to a binary tree and on M\"obius inversion. 
In the Gaussian case it allows us to represent any 
multivariate Gaussian vector using sums of independent
Gaussian random variables as in \eqref{ui} below, 
and to recover the result of \cite{scarsini} as a consequence. 
Similarly, in the binomial, gamma and Poisson settings it yields
a construction of random vectors 
having an arbitrarily given covariance matrix, 
and it provides the associated characterization of binomial and Poisson
supermodular ordering, cf. Theorems~\ref{fjkdsfds} and 
\ref{sufficiency}.
We refer the reader to \cite{hu-xie-ruan} and references therein
for the use of a different type of tree-based dependence 
in the setting of Bernoulli random vectors. 
\\

 We proceed as follows. 
 In Section~\ref{ds} we construct a general dependence structure
 that uses binary trees on the vertices of the $d$-dimensional hypercube. 
 In Section~\ref{s1} we apply this dependence structure
 to the characterization of the binomial 
 supermodular ordering via the componentwise ordering of covariances,
 cf. Theorem~\ref{fjkdsfds}. 
 In Section~\ref{s2} we deal with the case of 
 Poisson random vectors via the use of L\'evy measures on
 the vertices of the $d$-dimensional unit hypercube $C_d$,
 cf. Theorem~\ref{sufficiency}. 
 This also includes extensions to the increasing supermodular order,
 cf. Proposition~\ref{djklaaqe}.
 This result naturally extends to the supermodular
 ordering of sums of binomial,
 multivariate Gaussian and Poisson random vectors. 
 We also include a remark on the related convex 
 ordering problem for Poisson random vectors
 in Proposition~\ref{djklasasaa}.
\section{Tree-based correlation structures} 
\label{ds}
In this section we introduce the general
dependence structure used in this paper. 
Let $(e_1,\ldots , e_d)$ denote the canonical basis of $\real^d$, 
and let
$$
C_d : = \{ 0 , 1 \}^d
= \big\{
x = (x_1,\ldots , x_d ) \ : \
x_i\in \{ 0,1 \}, \ i=1,\ldots , d \big\}
$$ 
denote the set of vertices of the $d$-dimensional unit hypercube.
\\

We identify $C_d$ to the
power set $\{0,1 \}^d \simeq \{ S \in \{ 1,\ldots , d \}\}$ of
$\{ 1,\ldots , d \}$, i.e. 
each $x_S =(x_1,\ldots , x_d) \in C_d$ is identified to its index set
$S = \big\{
i \in \{ 1, \ldots , d \} \ : \ x_i = 1\big\}$.
In particular, we write 
$a \in x=(x_1,\ldots , x_d)$ when $x_a=1$, and $x\setminus \{ a \}$ for
$(x_i {\bf 1}_{\{ i \not= a \}})_{i=1,\ldots , d}$.
\\

We also endow $C_d$ 
  with the natural inclusion ordering of index sets
  i.e. we write $x \preceq y$ when $x\subseteq y$,
  or equivalently $0\leq x_i \leq y_i \leq 1$,
  $i=1,\ldots , d$, 
  and $x \prec y$ when $x \preceq y$ and $x\not= y$,
  i.e. $x\subsetneq y$. 
\subsubsection*{Random vectors} 
Given $( e_{k,l} )_{1\leq  k \leq l \leq  d}$ a  family of elements of $C_d$ 
with $e_{k,k} : = e_k$, $k=1,\ldots , d$, 
and a family $(X_{i,j})_{1\leq  i \leq  j \leq  d}$ of 
 independent random variables,
consider the random vector $X = (X_1,\ldots , X_d)$ given by 
$$
X_i := 
\sum_{ i \in e_{k,l} } X_{k,l}, \qquad i =1,\ldots , d. 
$$ 
 In other words, we have 
$$ 
X  = \sum_{i=1}^d e_i \sum_{ i \in e_{k,l} } X_{k,l}
= \sum_{1 \leq  k \leq l \leq  d} X_{k,l} e_{k,l}. 
$$ 
 Denoting by $(m_{k,l})_{1\leq  k \leq  l \leq  d }$ and 
 $(\sigma^2_{k,l})_{1\leq  k \leq  l \leq  d }$
 the respective means and variances of
 $(X_{k,l})_{1\leq k \leq  l \leq  d}$, we have 
\begin{equation*} 
E [ X_i ] = \sum_{i \in e_{k,l} } m_{k,l}, 
\qquad i =1,\ldots , d, 
\end{equation*}
 and 
\begin{equation}
\label{fjkljljkl2} 
\Cov ( X_i, X_j )
= \sum_{i, j \in e_{k,l}} \Var [ X_{k,l} ] 
= \sum_{e_{i,j} \preceq e_{k,l} } \sigma^2_{k,l},
\qquad 1\leq i \leq j \leq d.
\end{equation}
\subsubsection*{Binary trees} 
From now on we work under the following assumption 
on $(e_{k,l} )_{1\leq k < l \leq d}$. 
\begin{enumerate}[(H)]
\item The family $(e_{k,l} )_{1\leq k \leq l \leq d}\subset C_d$ forms a binary 
  tree of size $d(d+1)/2$ 
  in which every node $e_{k,l}$ has two children 
 $e_{k,l} \Scale[0.7]{\setminus \{ k \}}$ and $e_{k,l} \Scale[0.7]{\setminus \{ l \}}$. 
\end{enumerate} 
In particular, $(e_{k,l} )_{1\leq k \leq l \leq d}$ forms a binary
tree with height at most $d$ for the partial order $\preceq$. 
\\
 
\noindent
{\bf Example.}
 When $d=5$, consider 
 
\begin{center}
   \begin{tikzpicture}[thick,scale=0.6, every node/.style={transform shape}]

\node(11111)[draw,fill=white,
   shape=rounded rectangle,
   drop shadow={opacity=.5,shadow xshift=0pt},
   minimum width=1.8cm]
   at (0,2) {11111};

\node(11110)[draw,fill=white,
   shape=rounded rectangle,
   drop shadow={opacity=.5,shadow xshift=0pt},
   minimum width=1.8cm]
   at (-6,0) {11110};
\node(11101)[draw,fill=white,
   shape=rounded rectangle,
   drop shadow={opacity=.5,shadow xshift=0pt},
   minimum width=1.8cm]
   at (-3,0) {11101};
\node(11011)[draw,fill=white,
   shape=rounded rectangle,
   drop shadow={opacity=.5,shadow xshift=0pt},
   minimum width=1.8cm]
   at (0,0) {11011};
\node(10111)[draw,fill=white,
   shape=rounded rectangle,
   drop shadow={opacity=.5,shadow xshift=0pt},
   minimum width=1.8cm]
   at (3,0) {10111};
\node(01111)[draw,fill=white,
   shape=rounded rectangle,
   drop shadow={opacity=.5,shadow xshift=0pt},
   minimum width=1.8cm]
   at (6,0) {01111};

\node(11100)[draw,fill=white,
   shape=rounded rectangle,
   drop shadow={opacity=.5,shadow xshift=0pt},
   minimum width=1.8cm]
   at (-9,-2) {11100};
\node(11010)[draw,fill=white,
   shape=rounded rectangle,
   drop shadow={opacity=.5,shadow xshift=0pt},
   minimum width=1.8cm]
   at (-7,-2) {11010};
\node(11001)[draw,fill=white,
   shape=rounded rectangle,
   drop shadow={opacity=.5,shadow xshift=0pt},
   minimum width=1.8cm]
   at (-5,-2) {11001};
\node(10110)[draw,fill=white,
   shape=rounded rectangle,
   drop shadow={opacity=.5,shadow xshift=0pt},
   minimum width=1.8cm]
   at (-3,-2) {10110};
\node(10101)[draw,fill=white,
   shape=rounded rectangle,
   drop shadow={opacity=.5,shadow xshift=0pt},
   minimum width=1.8cm]
   at (-1,-2) {10101};
\node(10011)[draw,fill=white,
   shape=rounded rectangle,
   drop shadow={opacity=.5,shadow xshift=0pt},
   minimum width=1.8cm]
   at (1,-2) {10011};
\node(01110)[draw,fill=white,
   shape=rounded rectangle,
   drop shadow={opacity=.5,shadow xshift=0pt},
   minimum width=1.8cm]
   at (3,-2) {01110};
\node(01101)[draw,fill=white,
   shape=rounded rectangle,
   drop shadow={opacity=.5,shadow xshift=0pt},
   minimum width=1.8cm]
   at (5,-2) {01101};
\node(01011)[draw,fill=white,
   shape=rounded rectangle,
   drop shadow={opacity=.5,shadow xshift=0pt},
   minimum width=1.8cm]
   at (7,-2) {01011};
\node(00111)[draw,fill=white,
   shape=rounded rectangle,
   drop shadow={opacity=.5,shadow xshift=0pt},
   minimum width=1.8cm]
   at (9,-2) {00111};

\node(11000)[draw,fill=white,
   shape=rounded rectangle,
   drop shadow={opacity=.5,shadow xshift=0pt},
   minimum width=1.8cm]
   at (-9,-4) {11000};
\node(10100)[draw,fill=white,
   shape=rounded rectangle,
   drop shadow={opacity=.5,shadow xshift=0pt},
   minimum width=1.8cm]
   at (-7,-4) {10100};
\node(10010)[draw,fill=white,
   shape=rounded rectangle,
   drop shadow={opacity=.5,shadow xshift=0pt},
   minimum width=1.8cm]
   at (-5,-4) {10010};
\node(10001)[draw,fill=white,
   shape=rounded rectangle,
   drop shadow={opacity=.5,shadow xshift=0pt},
   minimum width=1.8cm]
   at (-3,-4) {10001};
\node(01100)[draw,fill=white,
   shape=rounded rectangle,
   drop shadow={opacity=.5,shadow xshift=0pt},
   minimum width=1.8cm]
   at (-1,-4) {01100};
\node(01010)[draw,fill=white,
   shape=rounded rectangle,
   drop shadow={opacity=.5,shadow xshift=0pt},
   minimum width=1.8cm]
   at (1,-4) {01010};
\node(01001)[draw,fill=white,
   shape=rounded rectangle,
   drop shadow={opacity=.5,shadow xshift=0pt},
   minimum width=1.8cm]
   at (3,-4) {01001};
\node(00110)[draw,fill=white,
   shape=rounded rectangle,
   drop shadow={opacity=.5,shadow xshift=0pt},
   minimum width=1.8cm]
   at (5,-4) {00110};
\node(00101)[draw,fill=white,
   shape=rounded rectangle,
   drop shadow={opacity=.5,shadow xshift=0pt},
   minimum width=1.8cm]
   at (7,-4) {00101};
\node(00011)[draw,fill=white,
   shape=rounded rectangle,
   drop shadow={opacity=.5,shadow xshift=0pt},
   minimum width=1.8cm]
   at (9,-4) {00011};

\node(10000)[draw,fill=white,
   shape=rounded rectangle,
   drop shadow={opacity=.5,shadow xshift=0pt},
   minimum width=1.8cm]
   at (-6,-6) {10000};
\node(01000)[draw,fill=white,
   shape=rounded rectangle,
   drop shadow={opacity=.5,shadow xshift=0pt},
   minimum width=1.8cm]
   at (-3,-6) {01000};
\node(00100)[draw,fill=white,
   shape=rounded rectangle,
   drop shadow={opacity=.5,shadow xshift=0pt},
   minimum width=1.8cm]
   at (0,-6) {00100};
\node(00010)[draw,fill=white,
   shape=rounded rectangle,
   drop shadow={opacity=.5,shadow xshift=0pt},
   minimum width=1.8cm]
   at (3,-6) {00010};
\node(00001)[draw,fill=white,
   shape=rounded rectangle,
   drop shadow={opacity=.5,shadow xshift=0pt},
   minimum width=1.8cm]
   at (6,-6) {00001};

   \draw (11111) -- (11101);
   \draw (11111) -- (01111);

   \draw (11101) -- (01101);
   \draw (11101) -- (11001);

   \draw[preaction={draw=white, -,line width=3pt}] (01111) -- (01110);
   \draw (01111) -- (01101);

   \draw (11001) -- (10001);
   \draw (11001) -- (01001);

   \draw[preaction={draw=white, -,line width=3pt}] (01110) -- (01100);
   \draw[preaction={draw=white, -,line width=3pt}] (01110) -- (01010);

   \draw[preaction={draw=white, -,line width=3pt}] (01101) -- (01100);
   \draw[preaction={draw=white, -,line width=3pt}] (01101) -- (01001);

   \draw (10001) -- (10000);
   \draw (10001) -- (00001);

   \draw[preaction={draw=white, -,line width=3pt}] (01100) -- (01000);
   \draw[preaction={draw=white, -,line width=3pt}] (01100) -- (00100);

   \draw[preaction={draw=white, -,line width=3pt}] (01010) -- (01000);
   \draw[preaction={draw=white, -,line width=3pt}] (01010) -- (00010);

   \draw[preaction={draw=white, -,line width=3pt}] (01001) -- (01000);
   \draw[preaction={draw=white, -,line width=3pt}] (01001) -- (00001);

\end{tikzpicture}
\end{center}

with 

$$
\begin{array}{cc|ccccc|}
 e_{1,2} & = & 1 & 1 & 0 & 0 & 1
 \\
 e_{1,3} & = & 1 & 1 & 1 & 0 & 1
 \\
 e_{1,4} & = & 1 & 1 & 1 & 1 & 1
 \\
 e_{1,5} & = & 1 & 0 & 0 & 0 & 1
 \\
 e_{2,3} & = & 0 & 1 & 1 & 0 & 0
 \\
 e_{2,4} & = & 0 & 1 & 0 & 1 & 0
 \\
 e_{2,5} & = & 0 & 1 & 0 & 0 & 1
 \\
 e_{3,4} & = & 0 & 1 & 1 & 1 & 0
 \\
 e_{3,5} & = & 0 & 1 & 1 & 0 & 1
 \\
 e_{4,5} & = & 0 & 1 & 1 & 1 & 1
\end{array}
$$
 and 
$$
\left\{
\begin{array}{l}
\displaystyle X_1 = X_{1,1} + X_{1,2} + X_{1,3} + X_{1,4} + X_{1,5}
\\
\displaystyle X_2 = X_{2,2} + X_{1,2} + X_{1,3} + X_{1,4} + X_{2,3} + X_{2,4} + X_{2,5} + X_{3,4} + X_{3,5} + X_{4,5}
\\
\displaystyle X_3 = X_{3,3} + X_{1,3} + X_{1,4} + X_{2,3} + X_{3,4} + X_{3,5} + X_{4,5}
\\
\displaystyle X_4 = X_{4,4} + X_{1,4} + X_{2,4} + X_{3,4} + X_{4,5}
\\
\displaystyle X_5 = X_{5,5} + X_{1,2} + X_{1,3} + X_{1,4} + X_{1,5} + X_{2,5} + X_{3,5} + X_{4,5}. 
\end{array}
\right.
$$
\subsubsection*{M\"obius inversion} 
 By M\"obius inversion, cf. Proposition~2 of \cite{rota1964}
 or Proposition~2.6.3 of \cite{peccatitaqqu},
 the coefficients $(\sigma^2_{k,l})_{1\leq  k \leq  l \leq  d }$
 in \eqref{fjkljljkl2} can be recovered using the covariances
$(\Cov ( X_i, X_j ) )_{1\leq  i \leq  j \leq  d }$ as
\begin{equation}
\label{djhklsdf} 
 \sigma^2_{k,l} = 
 \sum_{e_{k,l} \preceq e_{i,j} }
 \mu ( e_{i,j} , e_{k,l} )
 \Cov ( X_i, X_j ) 
 , \qquad 1\leq k \leq l \leq d, 
\end{equation}
 where $\mu ( x , y )$ is the M\"obius function 
 defined recursively by $\mu ( x , x ) := 1$ and 
 \begin{equation}
  \label{mu} 
 \mu ( x , y ) = - \sum_{y \prec z \preceq x}
 \mu ( x , z ),
 \qquad
 x,y \in C_d,
\end{equation} 
 cf. Proposition~1 of \cite{rota1964}. 
 Given $e_{k,l}\in C_d$, 
 the two children $e_{k,l}\Scale[0.7]{\setminus \{ k \}}$,
 and $e_{k,l} \Scale[0.7]{\setminus \{ l \}}$
 of $e_{k,l}$ have themselves a unique common child
 $e_{k,l} \Scale[0.7]{\setminus \{ k,l \}}$, and \eqref{mu} yields 
\begin{subequations}
\begin{empheq}[left=\empheqlbrace]{align}
\label{a1} 
& \displaystyle
\mu ( e_{k,l} , e_{k,l} ) = 1,
  \\ 
  \nonumber
   \\ 
\nonumber 
   & \mu ( e_{k,l} , e_{k,l} \Scale[0.7]{\setminus \{ k \}} ) = -1, 
  \\ 
  \nonumber
  \\ 
\nonumber 
  & \mu ( e_{k,l} , e_{k,l} \Scale[0.7]{\setminus \{ l \}} ) = -1, 
  \\ 
  \nonumber
  \\ 
\label{a4} 
  & \mu ( e_{k,l} , e_{k,l} \Scale[0.7]{\setminus \{ k,l \}} ) = 1, 
\end{empheq}
\end{subequations}
\vskip0.2cm
  as shown in the next graph: 
 
\begin{center}
  \begin{tikzpicture}[thick,scale=0.6, every node/.style={transform shape}]

\node(11111)[draw,fill=white,
  shape=rounded rectangle,
  drop shadow={opacity=.5,shadow xshift=0pt},
  minimum width=1.8cm]
  at (0,2) {$\mu ( e_{k,l} , e_{k,l} ) = 1$};

\node(11101)[draw,fill=white,
  shape=rounded rectangle,
  drop shadow={opacity=.5,shadow xshift=0pt},
  minimum width=1.8cm]
  at (-3,0) {$\mu ( e_{k,l} , e_{k,l} \Scale[0.7]{\setminus \{ k \}} ) = -1$};

\node(10111)[draw,fill=white,
  shape=rounded rectangle,
  drop shadow={opacity=.5,shadow xshift=0pt},
  minimum width=1.8cm]
  at (3,0) {$\mu ( e_{k,l} , e_{k,l} \Scale[0.7]{\setminus \{ l \}} ) = -1$};

\node(11001)[draw,fill=white,
  shape=rounded rectangle,
  drop shadow={opacity=.5,shadow xshift=0pt},
  minimum width=1.8cm]
  at (-6,-2) {$\mu ( e_{k,l} , e_{x,y} ) = 0$};

\node(10101)[draw,fill=white,
  shape=rounded rectangle,
  drop shadow={opacity=.5,shadow xshift=0pt},
  minimum width=1.8cm]
  at (0,-2) {$\mu ( e_{k,l} , e_{k,l} \Scale[0.7]{\setminus \{ k,l \}} ) = 1$};
\node(01101)[draw,fill=white,
  shape=rounded rectangle,
  drop shadow={opacity=.5,shadow xshift=0pt},
  minimum width=1.8cm]
  at (6,-2) {$\mu ( e_{k,l} , e_{z,t} ) = 0$};

  \draw (11111) -- (11101); 
  \draw (11111) -- (10111); 
  
  \draw (11101) -- (11001); 
  \draw (10111) -- (10101); 
  \draw (10111) -- (01101); 
  \draw (11101) -- (10101);

\end{tikzpicture}
\end{center}
\noindent
 In addition, we have $\mu ( e_{k,l} , e_{i,j} ) = 0$ in all other cases.
\\

In other words, \eqref{fjkljljkl2} can be solved recursively for
$(\sigma^2_{k,l})_{1\leq  k \leq  l \leq  d }$ given the data of 
$(\Cov ( X_i, X_j ))_{1\leq  i \leq  j \leq  d }$ by starting from
the equality $\Cov ( X_k, X_l ) = \sigma^2_{k,l}$
at a root $e_{k,l}$ of the tree, and then by moving down
the tree step by step until each leaf $e_i$. 
\\ 
 
\noindent
{\bf Examples.}
$i)$ Comonotonic vectors. The binary tree contains the node 
$e_{k,l}=$ \begin{tikzpicture}[thick,scale=0.6, every node/.style={transform shape}]
\node(11111)[draw,fill=white,
   shape=rounded rectangle,
   drop shadow={opacity=.5,shadow xshift=0pt},
   minimum width=1.8cm]
   at (0,2) {$111\cdots 11$};
   \end{tikzpicture}
as root, and all coefficients
$\sigma^2_{i,j}$ vanish for $(i,j)\not= (k,l)$,
which corresponds to the vector $(X_{k,l},X_{k,l},\ldots ,X_{k,l})$
with $\sigma^2_{k,l} = \Var [ X_{k,l}]$. 
\\ 
   
$ii)$ Pairwise dependence. The binary tree is reduced to the
$d$ leaves $e_1,\ldots , e_d$, and to their parents 
$$
 e_{k,l} =
 (0, \ldots, 0
 \underset{ \uparrow \atop k}{ ,1 , }
 0 , \ldots , 0 ,
 \underset{ \uparrow \atop l}{1}
 ,
 0 , \ldots , 0 ),
 \qquad
 1 \leq  k \leq  l \leq  d, 
$$
 as in the following example with $d=4$: 
 
\begin{center}
   \begin{tikzpicture}[thick,scale=0.6, every node/.style={transform shape}]

\node(1100)[draw,fill=white,
   shape=rounded rectangle,
   drop shadow={opacity=.5,shadow xshift=0pt},
   minimum width=1.8cm]
   at (-5,-4) {1100};
\node(1010)[draw,fill=white,
   shape=rounded rectangle,
   drop shadow={opacity=.5,shadow xshift=0pt},
   minimum width=1.8cm]
   at (-3,-4) {1010};
\node(1001)[draw,fill=white,
   shape=rounded rectangle,
   drop shadow={opacity=.5,shadow xshift=0pt},
   minimum width=1.8cm]
   at (-1,-4) {1001};
\node(0110)[draw,fill=white,
   shape=rounded rectangle,
   drop shadow={opacity=.5,shadow xshift=0pt},
   minimum width=1.8cm]
   at (1,-4) {0110};
\node(0101)[draw,fill=white,
   shape=rounded rectangle,
   drop shadow={opacity=.5,shadow xshift=0pt},
   minimum width=1.8cm]
   at (3,-4) {0101};
\node(0011)[draw,fill=white,
   shape=rounded rectangle,
   drop shadow={opacity=.5,shadow xshift=0pt},
   minimum width=1.8cm]
   at (5,-4) {0011};

\node(1000)[draw,fill=white,
   shape=rounded rectangle,
   drop shadow={opacity=.5,shadow xshift=0pt},
   minimum width=1.8cm]
   at (-3,-6) {1000};
\node(0100)[draw,fill=white,
   shape=rounded rectangle,
   drop shadow={opacity=.5,shadow xshift=0pt},
   minimum width=1.8cm]
   at (-1,-6) {0100};
\node(0010)[draw,fill=white,
   shape=rounded rectangle,
   drop shadow={opacity=.5,shadow xshift=0pt},
   minimum width=1.8cm]
   at (1,-6) {0010};
\node(0001)[draw,fill=white,
   shape=rounded rectangle,
   drop shadow={opacity=.5,shadow xshift=0pt},
   minimum width=1.8cm]
   at (3,-6) {0001};

   \draw (1100) -- (1000);
   \draw[preaction={draw=white, -,line width=3pt}] (1100) -- (0100);

   \draw[preaction={draw=white, -,line width=3pt}] (1010) -- (1000);
   \draw[preaction={draw=white, -,line width=3pt}] (1010) -- (0010);

   \draw[preaction={draw=white, -,line width=3pt}] (1001) -- (1000);
   \draw[preaction={draw=white, -,line width=3pt}] (1001) -- (0001);

   \draw[preaction={draw=white, -,line width=3pt}] (0110) -- (0100);
   \draw[preaction={draw=white, -,line width=3pt}] (0110) -- (0010);

   \draw[preaction={draw=white, -,line width=3pt}] (0101) -- (0100);
   \draw[preaction={draw=white, -,line width=3pt}] (0101) -- (0001);

   \draw[preaction={draw=white, -,line width=3pt}] (0011) -- (0010);
   \draw (0011) -- (0001);

\end{tikzpicture}
\end{center}

 Here, the vector $(X_i)_{i=1,\ldots ,d}$ is given by 
\begin{equation}
\nonumber 
\left\{
\begin{array}{l}
\displaystyle X_1 = X_{1,1} + X_{1,2} + X_{1,3} + X_{1,4} 
\\
\displaystyle X_2 = X_{1.2} + X_{2,2} + X_{2,3} + X_{2,4} 
\\
\displaystyle X_3 = X_{1,3} + X_{2,3} + X_{3,3} + X_{3,4} 
\\
\displaystyle X_4 = X_{1,4} + X_{2,4} + X_{3,4} + X_{4,4}, 
\end{array}
\right.
\end{equation} 
 and for any $d\geq 1$, by \eqref{fjkljljkl2} we have 
 $$
 \Cov(X_i, X_j)=\sigma_{i,j}^2, \qquad
 1\le i< j \leq d,
 $$ 
 and
 $$
 \Var [X_i] = \sum_{j=1}^{i-1} \sigma^2_{j,i}+\sum_{j=i}^d \sigma^2_{i,j},
 \qquad i=1,\ldots , d.
 $$
\vskip0.4cm 
  
\noindent 
$iii)$ Gaussian vectors.
If $(U_1,\ldots , U_d)$ is
a centered multivariate Gaussian random vector 
with covariance matrix 
$(\Cov ( U_i, U_j ))_{1\leq  i \leq  j \leq  d }$ 
we can apply the M\"obius inversion 
 \eqref{djhklsdf} 
 in order to determine the variance coefficients
 $(\sigma^2_{k,l})_{1\leq  k \leq  l \leq  d }
 = (\Var [ Z_{k,l} ] )_{1\leq  k \leq  l \leq  d }$
 of independent centered Gaussian random variables such that 
 \begin{equation}
   \label{ui} 
 U_i := \sum_{ i \in e_{k,l} } Z_{k,l},
 \qquad
 i =1,\ldots ,d. 
 \end{equation}
\vskip0.4cm 

 \noindent 
$iv)$ 
 As in $(iii)$ above, binomial, Poisson and gamma random vectors
 having a given covariance matrix can be constructed
 by solving \eqref{djhklsdf} on a binary tree
 since those distributions are characterized by their variance
 parameters and they are stable by summation for a given
 scale parameter.
 However in this case the construction may not be unique
 depending on the chosen binary tree, as their joint
 distribution is not characterized by their  
 covariance matrices. 
\\

 \noindent 
 $v)$
 The dependence structure considered in \cite{kp}
for Poisson random vectors corresponds to the 
binary tree built on the $d(d-1)/2$ nodes 
$$
 e_{i,j} =
 (1, \ldots, 1 , 
 \underset{ \uparrow \atop i}{1}
 , 0 , \ldots , 0 , 
 \underset{ \uparrow \atop j}{1}
 ,
 0 , \ldots , 0 ),
 \qquad
 1 \leq  i < j \leq  d, 
$$
 and on the $d$ leaves $e_1,\ldots , e_d$.
\section{Binomial random vectors} 
\label{s1}
Consider $(Z_1,\ldots , Z_n)$ independent Bernoulli random
variables with parameter $p\in [0,1]$ and
$(A(e_{k,l}))_{1\leq k \leq l \leq d}$ a {\em partition} of
 $\{ 1, \ldots , n \}$.
Let $(X_{A(e_{k,l})})_{1\leq k \leq l \leq d}$ denote the family
of independent binomial random variables given by 
\begin{equation*} 
X_{A(e_{k,l})} := \sum_{i\in A(e_{k,l})} Z_i,
\qquad
1\leq k \leq l \leq d, 
\end{equation*}
 with 
$$m_{k,l} = E[X_{A(e_{k,l})} ] = p | A(e_{k,l}) |
\qquad 
1\leq k \leq l \leq d, 
$$
where $| A(e_{k,l}) |$ denotes the cardinality of $A(e_{k,l})$, and 
$$
\sigma^2_{k,l} = \Var [X_{A(e_{k,l})} ] = p q | A(e_{k,l}) |,
\qquad 
1\leq k \leq l \leq d, 
$$ 
where $q:=1-p$.
Let now  
$$
A_i := \bigcup_{i \in e_{k,l} } A(e_{k,l}), \qquad i =1,\ldots , d, 
$$ 
 and consider the vector $(X_{A_1},\ldots , X_{A_d})$ of
 binomial random variables defined by 
\begin{equation}
\label{x1} 
X_{A_i} : = \sum_{k \in A_i} Z_k
= \sum_{i \in e_{k,l} } X_{A(e_{k,l})},
\qquad i =1,\ldots , d. 
\end{equation}

\noindent
{\bf Example.}
 When $d=5$, the binary tree 
 
\begin{center}
   \begin{tikzpicture}[thick,scale=0.6, every node/.style={transform shape}]

\node(11110)[draw,fill=white,
   shape=rounded rectangle,
   drop shadow={opacity=.5,shadow xshift=0pt},
   minimum width=1.8cm]
   at (-6,0) {11110};
\node(11101)[draw,fill=white,
   shape=rounded rectangle,
   drop shadow={opacity=.5,shadow xshift=0pt},
   minimum width=1.8cm]
   at (-3,0) {11101};
\node(11011)[draw,fill=white,
   shape=rounded rectangle,
   drop shadow={opacity=.5,shadow xshift=0pt},
   minimum width=1.8cm]
   at (0,0) {11011};
\node(10111)[draw,fill=white,
   shape=rounded rectangle,
   drop shadow={opacity=.5,shadow xshift=0pt},
   minimum width=1.8cm]
   at (3,0) {10111};
\node(01111)[draw,fill=white,
   shape=rounded rectangle,
   drop shadow={opacity=.5,shadow xshift=0pt},
   minimum width=1.8cm]
   at (6,0) {01111};

\node(11100)[draw,fill=white,
   shape=rounded rectangle,
   drop shadow={opacity=.5,shadow xshift=0pt},
   minimum width=1.8cm]
   at (-9,-2) {11100};
\node(11010)[draw,fill=white,
   shape=rounded rectangle,
   drop shadow={opacity=.5,shadow xshift=0pt},
   minimum width=1.8cm]
   at (-7,-2) {11010};
\node(11001)[draw,fill=white,
   shape=rounded rectangle,
   drop shadow={opacity=.5,shadow xshift=0pt},
   minimum width=1.8cm]
   at (-5,-2) {11001};
\node(10110)[draw,fill=white,
   shape=rounded rectangle,
   drop shadow={opacity=.5,shadow xshift=0pt},
   minimum width=1.8cm]
   at (-3,-2) {10110};
\node(10101)[draw,fill=white,
   shape=rounded rectangle,
   drop shadow={opacity=.5,shadow xshift=0pt},
   minimum width=1.8cm]
   at (-1,-2) {10101};
\node(10011)[draw,fill=white,
   shape=rounded rectangle,
   drop shadow={opacity=.5,shadow xshift=0pt},
   minimum width=1.8cm]
   at (1,-2) {10011};
\node(01110)[draw,fill=white,
   shape=rounded rectangle,
   drop shadow={opacity=.5,shadow xshift=0pt},
   minimum width=1.8cm]
   at (3,-2) {01110};
\node(01101)[draw,fill=white,
   shape=rounded rectangle,
   drop shadow={opacity=.5,shadow xshift=0pt},
   minimum width=1.8cm]
   at (5,-2) {01101};
\node(01011)[draw,fill=white,
   shape=rounded rectangle,
   drop shadow={opacity=.5,shadow xshift=0pt},
   minimum width=1.8cm]
   at (7,-2) {01011};
\node(00111)[draw,fill=white,
   shape=rounded rectangle,
   drop shadow={opacity=.5,shadow xshift=0pt},
   minimum width=1.8cm]
   at (9,-2) {00111};

\node(11000)[draw,fill=white,
   shape=rounded rectangle,
   drop shadow={opacity=.5,shadow xshift=0pt},
   minimum width=1.8cm]
   at (-9,-4) {11000};
\node(10100)[draw,fill=white,
   shape=rounded rectangle,
   drop shadow={opacity=.5,shadow xshift=0pt},
   minimum width=1.8cm]
   at (-7,-4) {10100};
\node(10010)[draw,fill=white,
   shape=rounded rectangle,
   drop shadow={opacity=.5,shadow xshift=0pt},
   minimum width=1.8cm]
   at (-5,-4) {10010};
\node(10001)[draw,fill=white,
   shape=rounded rectangle,
   drop shadow={opacity=.5,shadow xshift=0pt},
   minimum width=1.8cm]
   at (-3,-4) {10001};
\node(01100)[draw,fill=white,
   shape=rounded rectangle,
   drop shadow={opacity=.5,shadow xshift=0pt},
   minimum width=1.8cm]
   at (-1,-4) {01100};
\node(01010)[draw,fill=white,
   shape=rounded rectangle,
   drop shadow={opacity=.5,shadow xshift=0pt},
   minimum width=1.8cm]
   at (1,-4) {01010};
\node(01001)[draw,fill=white,
   shape=rounded rectangle,
   drop shadow={opacity=.5,shadow xshift=0pt},
   minimum width=1.8cm]
   at (3,-4) {01001};
\node(00110)[draw,fill=white,
   shape=rounded rectangle,
   drop shadow={opacity=.5,shadow xshift=0pt},
   minimum width=1.8cm]
   at (5,-4) {00110};
\node(00101)[draw,fill=white,
   shape=rounded rectangle,
   drop shadow={opacity=.5,shadow xshift=0pt},
   minimum width=1.8cm]
   at (7,-4) {00101};
\node(00011)[draw,fill=white,
   shape=rounded rectangle,
   drop shadow={opacity=.5,shadow xshift=0pt},
   minimum width=1.8cm]
   at (9,-4) {00011};

\node(10000)[draw,fill=white,
   shape=rounded rectangle,
   drop shadow={opacity=.5,shadow xshift=0pt},
   minimum width=1.8cm]
   at (-6,-6) {10000};
\node(01000)[draw,fill=white,
   shape=rounded rectangle,
   drop shadow={opacity=.5,shadow xshift=0pt},
   minimum width=1.8cm]
   at (-3,-6) {01000};
\node(00100)[draw,fill=white,
   shape=rounded rectangle,
   drop shadow={opacity=.5,shadow xshift=0pt},
   minimum width=1.8cm]
   at (0,-6) {00100};
\node(00010)[draw,fill=white,
   shape=rounded rectangle,
   drop shadow={opacity=.5,shadow xshift=0pt},
   minimum width=1.8cm]
   at (3,-6) {00010};
\node(00001)[draw,fill=white,
   shape=rounded rectangle,
   drop shadow={opacity=.5,shadow xshift=0pt},
   minimum width=1.8cm]
   at (6,-6) {00001};

   \draw (11101) -- (11001);
   \draw (11101) -- (01101);

   \draw[preaction={draw=white, -,line width=3pt}] (01111) -- (01110);
   \draw (01111) -- (01101);

   \draw (11001) -- (10001);
   \draw (11001) -- (01001);

   \draw[preaction={draw=white, -,line width=3pt}] (01110) -- (01100);
   \draw[preaction={draw=white, -,line width=3pt}] (01110) -- (01010);

   \draw[preaction={draw=white, -,line width=3pt}] (01101) -- (01100);
   \draw (01101) -- (01001);

   \draw (10010) -- (10000);
   \draw (10010) -- (00010);

   \draw[preaction={draw=white, -,line width=3pt}] (10001) -- (10000);
   \draw (10001) -- (00001);

   \draw[preaction={draw=white, -,line width=3pt}] (01100) -- (01000);
   \draw[preaction={draw=white, -,line width=3pt}] (01100) -- (00100);

   \draw[preaction={draw=white, -,line width=3pt}] (01010) -- (01000);
   \draw[preaction={draw=white, -,line width=3pt}] (01010) -- (00010);

   \draw[preaction={draw=white, -,line width=3pt}] (01001) -- (01000);
   \draw (01001) -- (00001);

\node(11110)[draw,fill=white,
   shape=rounded rectangle,
   drop shadow={opacity=.5,shadow xshift=0pt},
   minimum width=1.8cm]
   at (-6,0) {11110};
\node(11101)[draw,fill=white,
   shape=rounded rectangle,
   drop shadow={opacity=.5,shadow xshift=0pt},
   minimum width=1.8cm]
   at (-3,0) {11101};
\node(11011)[draw,fill=white,
   shape=rounded rectangle,
   drop shadow={opacity=.5,shadow xshift=0pt},
   minimum width=1.8cm]
   at (0,0) {11011};
\node(10111)[draw,fill=white,
   shape=rounded rectangle,
   drop shadow={opacity=.5,shadow xshift=0pt},
   minimum width=1.8cm]
   at (3,0) {10111};
\node(01111)[draw,fill=white,
   shape=rounded rectangle,
   drop shadow={opacity=.5,shadow xshift=0pt},
   minimum width=1.8cm]
   at (6,0) {01111};

\node(11100)[draw,fill=white,
   shape=rounded rectangle,
   drop shadow={opacity=.5,shadow xshift=0pt},
   minimum width=1.8cm]
   at (-9,-2) {11100};
\node(11010)[draw,fill=white,
   shape=rounded rectangle,
   drop shadow={opacity=.5,shadow xshift=0pt},
   minimum width=1.8cm]
   at (-7,-2) {11010};
\node(11001)[draw,fill=white,
   shape=rounded rectangle,
   drop shadow={opacity=.5,shadow xshift=0pt},
   minimum width=1.8cm]
   at (-5,-2) {11001};
\node(10110)[draw,fill=white,
   shape=rounded rectangle,
   drop shadow={opacity=.5,shadow xshift=0pt},
   minimum width=1.8cm]
   at (-3,-2) {10110};
\node(10101)[draw,fill=white,
   shape=rounded rectangle,
   drop shadow={opacity=.5,shadow xshift=0pt},
   minimum width=1.8cm]
   at (-1,-2) {10101};
\node(10011)[draw,fill=white,
   shape=rounded rectangle,
   drop shadow={opacity=.5,shadow xshift=0pt},
   minimum width=1.8cm]
   at (1,-2) {10011};
\node(01110)[draw,fill=white,
   shape=rounded rectangle,
   drop shadow={opacity=.5,shadow xshift=0pt},
   minimum width=1.8cm]
   at (3,-2) {01110};
\node(01101)[draw,fill=white,
   shape=rounded rectangle,
   drop shadow={opacity=.5,shadow xshift=0pt},
   minimum width=1.8cm]
   at (5,-2) {01101};
\node(01011)[draw,fill=white,
   shape=rounded rectangle,
   drop shadow={opacity=.5,shadow xshift=0pt},
   minimum width=1.8cm]
   at (7,-2) {01011};
\node(00111)[draw,fill=white,
   shape=rounded rectangle,
   drop shadow={opacity=.5,shadow xshift=0pt},
   minimum width=1.8cm]
   at (9,-2) {00111};

\node(11000)[draw,fill=white,
   shape=rounded rectangle,
   drop shadow={opacity=.5,shadow xshift=0pt},
   minimum width=1.8cm]
   at (-9,-4) {11000};
\node(10100)[draw,fill=white,
   shape=rounded rectangle,
   drop shadow={opacity=.5,shadow xshift=0pt},
   minimum width=1.8cm]
   at (-7,-4) {10100};
\node(10010)[draw,fill=white,
   shape=rounded rectangle,
   drop shadow={opacity=.5,shadow xshift=0pt},
   minimum width=1.8cm]
   at (-5,-4) {10010};
\node(10001)[draw,fill=white,
   shape=rounded rectangle,
   drop shadow={opacity=.5,shadow xshift=0pt},
   minimum width=1.8cm]
   at (-3,-4) {10001};
\node(01100)[draw,fill=white,
   shape=rounded rectangle,
   drop shadow={opacity=.5,shadow xshift=0pt},
   minimum width=1.8cm]
   at (-1,-4) {01100};
\node(01010)[draw,fill=white,
   shape=rounded rectangle,
   drop shadow={opacity=.5,shadow xshift=0pt},
   minimum width=1.8cm]
   at (1,-4) {01010};
\node(01001)[draw,fill=white,
   shape=rounded rectangle,
   drop shadow={opacity=.5,shadow xshift=0pt},
   minimum width=1.8cm]
   at (3,-4) {01001};
\node(00110)[draw,fill=white,
   shape=rounded rectangle,
   drop shadow={opacity=.5,shadow xshift=0pt},
   minimum width=1.8cm]
   at (5,-4) {00110};
\node(00101)[draw,fill=white,
   shape=rounded rectangle,
   drop shadow={opacity=.5,shadow xshift=0pt},
   minimum width=1.8cm]
   at (7,-4) {00101};
\node(00011)[draw,fill=white,
   shape=rounded rectangle,
   drop shadow={opacity=.5,shadow xshift=0pt},
   minimum width=1.8cm]
   at (9,-4) {00011};

\node(10000)[draw,fill=white,
   shape=rounded rectangle,
   drop shadow={opacity=.5,shadow xshift=0pt},
   minimum width=1.8cm]
   at (-6,-6) {10000};
\node(01000)[draw,fill=white,
   shape=rounded rectangle,
   drop shadow={opacity=.5,shadow xshift=0pt},
   minimum width=1.8cm]
   at (-3,-6) {01000};
\node(00100)[draw,fill=white,
   shape=rounded rectangle,
   drop shadow={opacity=.5,shadow xshift=0pt},
   minimum width=1.8cm]
   at (0,-6) {00100};
\node(00010)[draw,fill=white,
   shape=rounded rectangle,
   drop shadow={opacity=.5,shadow xshift=0pt},
   minimum width=1.8cm]
   at (3,-6) {00010};
\node(00001)[draw,fill=white,
   shape=rounded rectangle,
   drop shadow={opacity=.5,shadow xshift=0pt},
   minimum width=1.8cm]
   at (6,-6) {00001};

\end{tikzpicture}
\end{center}
 corresponds to 
$$
\begin{array}{cc|ccccc|}
 e_{S(1,2)} & = & 1 & 1 & 0 & 0 & 1
 \\
 e_{S(1,3)} & = & 1 & 1 & 1 & 0 & 1
 \\
 e_{S(1,4)} & = & 1 & 0 & 0 & 1 & 0
 \\
 e_{S(1,5)} & = & 1 & 0 & 0 & 0 & 1
 \\
 e_{S(2,3)} & = & 0 & 1 & 1 & 0 & 0
 \\
 e_{S(2,4)} & = & 0 & 1 & 0 & 1 & 0
 \\
 e_{S(2,5)} & = & 0 & 1 & 0 & 0 & 1
 \\
 e_{S(3,4)} & = & 0 & 1 & 1 & 1 & 0
 \\
 e_{S(3,5)} & = & 0 & 1 & 1 & 0 & 1
 \\
 e_{S(4,5)} & = & 0 & 1 & 1 & 1 & 1
\end{array}
$$
and
$$
\hskip-0.2cm
\left\{
\hskip-0.2cm
\begin{array}{l}
\displaystyle X_{A_1} = X_{A(e_{1,1})} + X_{A(e_{1,2})} + X_{A(e_{1,3})} + X_{A(e_{1,4})} + X_{A(e_{1,5})} 
\\
\displaystyle X_{A_2} = X_{A(e_{2,2})} + X_{A(e_{1,2})} + X_{A(e_{1,3})} + X_{A(e_{2,3})} + X_{A(e_{2,4})} + X_{A(e_{2,5})}
+ X_{A(e_{3,4})}+ X_{A(e_{3,5})}+ X_{A(e_{4,5})}
\\
\displaystyle X_{A_3} = X_{A(e_{3,3})} + X_{A(e_{1,3})} + X_{A(e_{2,3})} + X_{A(e_{3,4})} + X_{A(e_{3,5})} + X_{A(e_{4,5})}
\\
\displaystyle X_{A_4} = X_{A(e_{4,4})} + X_{A(e_{1,4})} + X_{A(e_{2,4})} + X_{A(e_{3,4})} + X_{A(e_{4,5})}
\\
\displaystyle X_{A_5} = X_{A(e_{5,5})} + X_{A(e_{1,2})} + X_{A(e_{1,3})} + X_{A(e_{1,5})} + X_{A(e_{2,5})} + X_{A(e_{3,5})} + X_{A(e_{4,5})}. 
\end{array}
\right.
$$
 In general we have 
 $$
E [ X_{A_i} ] = p \sum_{i \in e_{k,l} } |A(e_{k,l})|, 
\qquad i =1,\ldots , d, 
$$ 
and
$$ 
 \Cov ( X_{A_i}, X_{A_j} ) = 
 p q \sum_{ e_{i,j} \preceq e_{k,l} } |A(e_{k,l})|, \qquad 1 \leq i \leq j \leq d. 
$$
 with the inversion formula 
$$ 
 p q |A(e_{k,l})| = 
 \sum_{e_{k,l} \preceq e_{i,j} }
 \mu ( e_{i,j} , e_{k,l} )
 \Cov ( X_i, X_j ) 
 , \qquad 1\leq k \leq l \leq d, 
$$ 
 that follows from \eqref{djhklsdf}. 
 Next is the main result of this section.
 \begin{theorem} 
 \label{fjkdsfds}
 Consider 
 $(X_{A_1},\ldots , X_{A_d})$ and 
 $(X_{B_1},\ldots , X_{B_d})$
 two binomial random vectors represented as
 in \eqref{x1}. 
The conditions
\begin{equation}
\label{con11}
\EE[X_{A_i}]=\EE[X_{B_i}],
 \qquad 1 \leq  i \leq  d,
\end{equation}
 and 
\begin{equation}
\label{con22}
\Cov(X_{A_i}, X_{A_j})\leq \Cov(X_{B_i}, X_{B_j}),
 \qquad 
 1 \leq i < j \leq d,
\end{equation}
 are necessary and sufficient for the
 supermodular ordering
 $$
 (X_{A_1},\ldots , X_{A_d})
 \le_{\rm sm}
 (X_{B_1},\ldots , X_{B_d}).
 $$ 
\end{theorem}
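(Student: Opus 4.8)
The plan splits into the (routine) necessity and the substantial sufficiency. \textbf{Necessity:} since $x\mapsto x_i$ and $x\mapsto -x_i$ are supermodular, $\le_{\rm sm}$ forces $E[X_{A_i}]=E[X_{B_i}]$; and since $x\mapsto x_ix_j$ is supermodular for $i\ne j$, it forces $E[X_{A_i}X_{A_j}]\le E[X_{B_i}X_{B_j}]$, which together with the equality of means is \eqref{con22}. For sufficiency I may assume $0<p<1$, the cases $p\in\{0,1\}$ being degenerate.

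For \textbf{sufficiency} the idea is to manufacture $X_B$ from $X_A$ by a finite chain of correlation-increasing elementary modifications, each supermodular increasing. First split every block $X_{A(e_{k,l})}$ into $|A(e_{k,l})|$ i.i.d.\ Bernoulli$(p)$ summands, so that $X_A$ becomes a sum of independent terms $\xi\,e$ with $e$ ranging over tree vertices with prescribed multiplicities, and likewise for $X_B$. The elementary step I would establish is: for $u,u'\in C_d$, an auxiliary random vector $S$, and i.i.d.\ Bernoulli$(p)$ variables $\xi_1,\xi_2$ (resp.\ $\xi,\xi'$) independent of $S$,
$$
S+\xi_1u+\xi_2u'\ \le_{\rm sm}\ S+\xi\,(u\vee u')+\xi'(u\wedge u').
$$
This follows directly from the definition: conditioning on $S$ and using the $0$--$1$ identity $(u\vee u')+(u\wedge u')=u+u'$, the $p^2$- and $q^2$-contributions cancel and the value of $E[\Phi]$ at the right-hand side minus its value at the left-hand side equals $pq\,E\big[\Phi(S+u\vee u')+\Phi(S+u\wedge u')-\Phi(S+u)-\Phi(S+u')\big]$, which is $\ge 0$ because $x\mapsto\Phi(S+x)$ is supermodular and $\vee,\wedge$ commute with translation by $S$. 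Taking $S$ to be the sum of all remaining independent terms, this says that replacing a pair of blocks sitting at vertices $u,u'$ by two independent blocks at $u\vee u'$ and $u\wedge u'$ (a block at $\emptyset$ being simply discarded) produces a vector dominating the original in $\le_{\rm sm}$.

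It then remains to reach the configuration $(|B(e_{k,l})|)$ from $(|A(e_{k,l})|)$ through such merges. A useful preliminary reduction is to pull out a common part: moving $\min(|A(e)|,|B(e)|)$ of the blocks at each vertex $e$ into a single vector $C$ independent of the rest, the equality of means \eqref{con11} makes the two remainders $D_A,D_B$ equidistributed marginally, and since $\le_{\rm sm}$ is stable under adding an independent common vector it suffices to prove $D_A\le_{\rm sm}D_B$; one may thus assume $\min(|A(e)|,|B(e)|)=0$ for every vertex $e$. By \eqref{fjkljljkl2} the integers $F^A(e_{i,j}):=\sum_{e_{i,j}\preceq e}|A(e)|=(pq)^{-1}\Cov(X_{A_i},X_{A_j})$, and $F^B$ defined the same way, satisfy $F^A\le F^B$ on all tree vertices, with equality at the leaves $e_i$ by \eqref{con11}. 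A merge of $u,u'$ changes $F(v)$ by $\mathbf{1}_{v\subseteq u\vee u'}-\mathbf{1}_{v\subseteq u}-\mathbf{1}_{v\subseteq u'}+\mathbf{1}_{v\subseteq u\wedge u'}$, which one checks is always $0$ or $1$, so merges only raise $F$ and by at most one at a time. The combinatorial heart of the argument is to show that whenever $F^A\ne F^B$ there exist two vertices $u,u'$ carrying positive multiplicities in the current configuration, with $u\not\subseteq u'$ and $u'\not\subseteq u$ and with $u\vee u'$, $u\wedge u'$ again tree vertices (or $\emptyset$), whose merge strictly decreases $\sum_{i<j}(F^B(e_{i,j})-F^A(e_{i,j}))$ while never pushing $F^A$ past $F^B$ at any vertex. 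Iterating, the $F$-profile reaches $F^B$ on all tree vertices, whereupon the M\"obius inversion \eqref{djhklsdf} forces the entire configuration to coincide with $(|B(e)|)$; combined with the elementary step and transitivity of $\le_{\rm sm}$ this yields $D_A\le_{\rm sm}D_B$, hence $X_A\le_{\rm sm}X_B$.

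I expect the main obstacle to be exactly this last existence claim. Merging only the two siblings $e_{k,l}\setminus\{k\}$ and $e_{k,l}\setminus\{l\}$ of a common vertex $e_{k,l}$ --- for which $u\vee u'=e_{k,l}$ and $u\wedge u'$ is their unique common child $e_{k,l}\setminus\{k,l\}$, cf.\ \eqref{a1}--\eqref{a4} --- is too rigid: one checks that already for $d=3$ such a restricted procedure can deadlock. One must therefore allow merges of more general pairs of tree vertices and show, using assumption (H) and the behaviour of $\vee,\wedge$ on the tree together with the reduction $\min(|A(e)|,|B(e)|)=0$ and the nonnegativity of all the $|B(e)|$, that an admissible discrepancy-decreasing merge is always available.
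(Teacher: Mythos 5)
Your necessity argument is fine, and your elementary merge lemma is correct: conditioning on $S$, the $p^2$ and $q^2$ terms cancel because $u\vee u'+u\wedge u'=u+u'$ on $C_d$, and the remaining $pq$-term is nonnegative by supermodularity. Specialized to $u=e_{k,l}\Scale[0.7]{\setminus\{k\}}$ and $u'=e_{k,l}\Scale[0.7]{\setminus\{l\}}$, so that $u\vee u'=e_{k,l}$ and $u\wedge u'=e_{k,l}\Scale[0.7]{\setminus\{k,l\}}$, this is exactly the one inequality the paper checks by hand (the comparison of $E[\phi(U,U+V,\ldots,U+V,U)]$ with $E[\phi(U,U+V,\ldots,U+V,V)]$ at the end of its proof). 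The paper's route is precisely the ``rigid'' one you set aside: it reduces by induction to a single covariance entry increasing by $pq$, uses the M\"obius inversion \eqref{djhklsdf} with \eqref{a1}--\eqref{a4} to pin down the block-size changes as $+1,-1,-1,+1$ at $e_{k,l}$, its two children and its grandchild, and realizes this by transferring two Bernoulli summands $U,V$ --- i.e.\ by one sibling merge. Your observation that this one-entry-at-a-time scheme can deadlock is well taken (for $d=3$ with the tree $e_{1,2}=111$, $e_{1,3}=101$, $e_{2,3}=011$, passing from the independent vector to the comonotonic one admits no first increment keeping all block counts nonnegative), so your instinct to allow more general merges is reasonable; but it means you are proposing to repair the argument, not merely reproduce it.

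The problem is that you have not carried the repair out. The entire substance of the sufficiency direction has been relocated into the assertion that, as long as $F^A\neq F^B$, there exist two occupied vertices $u,u'$, incomparable, with $u\vee u'$ and $u\wedge u'$ again tree vertices, whose merge strictly decreases $\sum_{i<j}(F^B(e_{i,j})-F^A(e_{i,j}))$ without pushing $F^A$ above $F^B$ anywhere --- and you explicitly leave this unproved. This is not a routine verification. A single merge raises $F$ simultaneously at every tree vertex $v$ with $v\preceq u\vee u'$, $v\not\preceq u$, $v\not\preceq u'$, so avoiding overshoot at vertices where $F^A=F^B$ already holds is a genuine constraint; the requirement that $u\vee u'$ and $u\wedge u'$ be tree vertices (which you need for the final M\"obius identification of the terminal configuration with $(|B(e)|)$) further restricts the admissible pairs; and the reduction to $\min(|A(e)|,|B(e)|)=0$ does not obviously produce a candidate pair. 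Without a proof of this existence claim --- or, alternatively, a proof that the paper's single-increment chain can always be ordered so that all intermediate block counts stay nonnegative --- the sufficiency direction is not established. As it stands the proposal is a plausible programme with a correct key inequality, but the combinatorial heart of the theorem is missing.
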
 
\begin{proof}
 It is well-known, cf. e.g. Theorem~3.9.5 of \cite{mullerbk}, 
 that for any couple $(X,Y)$ of $d$-dimensional random vectors, the
 condition $X \le_{\rm sm} Y$ implies \eqref{con11} and
 \eqref{con22}, therefore it suffices to show sufficiency.
 Using induction, it is also sufficient to consider the case where
\begin{equation}
\label{hjkljk} 
 \Cov(X_{B_k}, X_{B_l}) = \Cov(X_{A_k} , X_{A_l}) + pq, 
\end{equation}
 for some given $1\leq k < l \leq d$, and 
\begin{equation}
\label{hjkljk2} 
 \Cov(X_{B_i}, X_{B_j} ) = \Cov (X_{A_i} , X_{A_j} ),
 \qquad 1 \leq i \leq j \leq d, \quad (i,j) \not= (k ,l ).
\end{equation} 
By the M\"obius inversion formula \eqref{djhklsdf}
there is a unique way (up to a permutation of $\{ 1,\ldots , n\}$)
to choose 
$(A(e_{k,l}))_{1\leq k \leq l \leq d}$ 
and
$(B(e_{k,l}))_{1\leq k \leq l \leq d}$ satisfying
\eqref{hjkljk} and \eqref{hjkljk2} respectively, with the relation
\begin{eqnarray*} 
  pq |B(e_{i,j})| & = & 
 \sum_{e_{i,j} \preceq e_{x,y}}
 \mu ( e_{x,y} , e_{i,j} )
 \Cov ( X_{B_x} , X_{B_y} ) 
 \\
  & = & 
 pq
 {\bf 1}_{\{ e_{i,j} \preceq e_{k ,l } \} }
 \mu ( e_{k ,l } , e_{i,j} )
 +
 \sum_{e_{i,j} \preceq e_{x,y}}
 \mu ( e_{x,y} , e_{i,j} )
 \Cov ( X_{A_x} , X_{A_y} ) \\
  & = & 
 pq {\bf 1}_{\{ e_{i,j} \preceq e_{k ,l } \} }
 \mu ( e_{k ,l } , e_{i,j} )
 + pq |A(e_{i,j})|,
 \qquad 1\leq i \leq j \leq d,
\end{eqnarray*} 
 from \eqref{djhklsdf}, i.e.
\begin{equation}
  \label{fjkg} 
|B(e_{i,j})|
=
{\bf 1}_{\{ e_{i,j} \preceq e_{k ,l } \} }
 \mu ( e_{k ,l } , e_{i,j} )
 + |A(e_{i,j})|,
 \qquad 1\leq i \leq j \leq d. 
\end{equation}
 Given the children
 $e_{k,l} \Scale[0.7]{\setminus \{ k \}}$, $e_{k,l} \Scale[0.7]{\setminus \{ l \}}\in C_d$
 and grandchild $e_{k,l} \Scale[0.7]{\setminus \{ k,l \}}$ 
 of $e_{k,l}\in C_d$, 
 by \eqref{a1}-\eqref{a4} and \eqref{fjkg} we have
$$
 \left\{
 \begin{array}{l}
 |B(e_{k,l})| = |A(e_{k,l})| + 1,
 \\ \\ 
 |B(e_{k,l} \Scale[0.7]{\setminus \{ k \}} )| = |A(e_{k,l} \Scale[0.7]{\setminus \{ k \}} )| - 1, 
 \\ \\
 |B(e_{k,l} \Scale[0.7]{\setminus \{ l \}} )| = |A(e_{k,l} \Scale[0.7]{\setminus \{ l \}} )| - 1, 
 \\ \\
 |B(e_{k,l} \Scale[0.7]{\setminus \{ k,l \}} )| = |A(e_{k,l} \Scale[0.7]{\setminus \{ k,l \}} )| + 1, 
 \end{array}
 \right.
$$ 
 with $| B(e_{i,j} ) | = | A(e_{i,j})|$
 in all other cases since $\mu ( e_{k,l} , e_{i,j} ) = 0$.
 We choose to realize the above as
 \begin{equation}
   \label{01} 
 \left\{
 \begin{array}{l}
 A(e_{k,l}) = B(e_{k,l}) \setminus \{ k \},
 \\ \\ 
 B(e_{k,l} \Scale[0.7]{\setminus \{ k \}} ) =
 A(e_{k,l} \Scale[0.7]{\setminus \{ k \}} ) \setminus \{ k \}, 
 \\ \\
 B(e_{k,l} \Scale[0.7]{\setminus \{ l \}} ) =
 A(e_{k,l} \Scale[0.7]{\setminus \{ l \}} ) \setminus \{ l \}, 
 \\ \\
 A(e_{k,l} \Scale[0.7]{\setminus \{ k,l \}} )
 =
 B(e_{k,l} \Scale[0.7]{\setminus \{ k,l \}} ) \setminus \{ l \}, 
 \end{array}
 \right.
 \end{equation}
 for some given $1\leq k < l \leq d$,
 with $k,l \notin B(e_{i,j} ) = A(e_{i,j})$ 
 in all other cases. 
 Noting that
$$ 
 l \in B(e_{k,l} \Scale[0.7]{\setminus \{ k,l \}} ), \quad
 k \in A(e_{k,l} \Scale[0.7]{\setminus \{ k \}} ), \quad
 l \in A(e_{k,l} \Scale[0.7]{\setminus \{ l \}} ),
$$
 and
 $$
 B(e_{k,l} \Scale[0.7]{\setminus \{ k,l \}} ) ) \cap B_k = \emptyset,
 \quad 
 B(e_{k,l} \Scale[0.7]{\setminus \{ k,l \}} ) ) \cap B_l = \emptyset,
 \quad 
 A(e_{k,l} \Scale[0.7]{\setminus \{ k \}} ) ) \cap A_k = \emptyset, 
 \quad
 A(e_{k,l} \Scale[0.7]{\setminus \{ l \}} ) ) \cap A_l = \emptyset,
 $$ 
 we find that 
 $$
 l \notin
 B_k, 
 \qquad 
 l \notin
 B_l, 
 \qquad 
 k\notin A_k, 
 \qquad
 l \notin A_l. 
$$ 
 Hence, using the symmetric difference operator $A\setminus B := A \cap B^c$,
 for $i=1,\ldots , d$ we have 
\begin{equation} 
\label{b1} 
 A_i =
 \left\{
 \begin{array}{ll}
( B_k \setminus B(e_{k,l})\setminus B(e_{k,l} \Scale[0.7]{\setminus \{ k,l \}} ) )
\cup A(e_{k,l}) 
  \cup \{ l \}, 
 & i = k, 
  \\
  \\
  ( B_i \setminus B(e_{k,l})\setminus B(e_{k,l} \Scale[0.7]{\setminus \{ k,l \}} ) 
) \cup A(e_{k,l}) \cup \{k\} 
\cup A(e_{k,l} \Scale[0.7]{\setminus \{ k,l \}} )
\cup \{ l \}, & i\notin \{ k, l\}, 
  \\
  \\
  ( B_l \setminus B(e_{k,l})\setminus B(e_{k,l} \Scale[0.7]{\setminus \{ k,l \}} ) )
\cup A(e_{k,l}) \cup \{ k \}, 
 & i = l,
 \end{array}
 \right. 
\end{equation}
\vskip0.2cm
 and 
\begin{equation}
\label{b2} 
 B_i =
 \left\{
 \begin{array}{ll}
( B_k \setminus B(e_{k,l})\setminus B(e_{k,l} \Scale[0.7]{\setminus \{ k,l \}} ) )
\cup B(e_{k,l}), 
 & i = k, 
  \\
  \\
  ( B_i \setminus B(e_{k,l})\setminus B(e_{k,l} \Scale[0.7]{\setminus \{ k,l \}} ) 
) \cup B(e_{k,l}) 
\cup B(e_{k,l} \Scale[0.7]{\setminus \{ k,l \}} ), & i\notin \{ k, l\}, 
  \\
  \\
 ( B_l \setminus B(e_{k,l})\setminus B(e_{k,l} \Scale[0.7]{\setminus \{ k,l \}} ) )
\cup B(e_{k,l}), 
 & i = l. 
\end{array}
 \right. 
\end{equation}
\vskip0.2cm
 In other words, from \eqref{01} we can write 
 \begin{equation}
   \label{001} 
 \left\{
 \begin{array}{l}
 X_{B(e_{k,l})} = X_{A(e_{k,l})} + U, 
 \\ \\ 
 X_{A(e_{k,l} \Scale[0.7]{\setminus \{ k \}} )} = X_{B(e_{k,l}\Scale[0.7]{\setminus \{ k \}} )} + U, 
 \\ \\
 X_{A(e_{k,l} \Scale[0.7]{\setminus \{ l \}} )} = X_{B(e_{k,l}\Scale[0.7]{\setminus \{ l \}} )} + V, 
 \\ \\
 X_{B(e_{k,l} \Scale[0.7]{\setminus \{ k,l \}} )} = X_{A(e_{k,l} \Scale[0.7]{\setminus \{ k,l \}} )} + V,
 \end{array}
 \right.
\end{equation}  
 where $U , V \in \{Z_1,\ldots , Z_n\}$
 are two independent Bernoulli random variables,
 while we have $X_{B(e_{i,j} )} = X_{A(e_{i,j})}$ in all other cases,
 and from \eqref{b1}-\eqref{b2} we get 
\begin{equation}
\label{c1} 
 X_{A_i} =
 \left\{
 \begin{array}{ll}
   X_{ B_k \setminus B(e_{k,l})\setminus B(e_{k,l} \Scale[0.7]{\setminus \{ k,l \}} ) }
    + X_{A(e_{k,l})} + V, 
 & i = k, 
  \\
  \\
  X_{B_i \setminus B(e_{k,l})\setminus B(e_{k,l} \Scale[0.7]{\setminus \{ k,l \} )}} 
 + X_{A(e_{k,l})} + U + X_{A(e_{k,l} \Scale[0.7]{\setminus \{ k,l \}} )}
 + V, & i\notin \{ k, l\}, ~~~~~~~
  \\
  \\
X_{B_l \setminus B(e_{k,l})\setminus B(e_{k,l} \Scale[0.7]{\setminus \{ k,l \}} )}
+ X_{A(e_{k,l})} + U, 
 & i = l,
 \end{array}
 \right. 
\end{equation}
\vskip0.2cm
and
\begin{equation}
\label{c2} 
 X_{B_i} =
 \left\{
 \begin{array}{ll}
   X_{ B_k \setminus B(e_{k,l})\setminus B(e_{k,l} \Scale[0.7]{\setminus \{ k,l \}} ) }
    + X_{B(e_{k,l})}, 
 & i = k, 
  \\
  \\
  X_{B_i \setminus B(e_{k,l})\setminus B(e_{k,l} \Scale[0.7]{\setminus \{ k,l \} ) }}
  + X_{B(e_{k,l})} + X_{B(e_{k,l} \Scale[0.7]{\setminus \{ k,l \}} )}, & i\notin \{ k, l\}. ~~~~~~~
  \\
  \\
    X_{ B_l \setminus B(e_{k,l})\setminus B(e_{k,l} \Scale[0.7]{\setminus \{ k,l \}} ) }
    + X_{B(e_{k,l})}, 
 & i = l. 
\end{array}
 \right. 
\end{equation}
\vskip0.2cm
Now, for any supermodular function $\phi : \real^d \longrightarrow \real$ 
 we have, using \eqref{c2} and \eqref{001}, 
 \begin{align*}
& 
     E \left[ \phi
     \left( \left( X_{B_i} \right)_{1\leq i \leq d} \right)
       \right]
   \\
   & = 
       E \left[ \phi
     \left(
     \left(
     X_{B_i \setminus B(e_{k,l})\setminus B(e_{k,l} \Scale[0.7]{\setminus \{ k,l \}} ) }
     +X_{B(e_{k,l})} + X_{B(e_{k,l} \Scale[0.7]{\setminus \{ k,l \}} )} {\bf 1}_{\{ i \notin \{ k , l \} \} }
     \right)_{1\leq i \leq d}   \right)
       \right]
   \\
   & = 
       E \left[ \phi
     \left(
     \left(
     X_{B_i \setminus B(e_{k,l})\setminus B(e_{k,l} \Scale[0.7]{\setminus \{ k,l \}} ) }
          +X_{A(e_{k,l})} + U 
     + ( X_{A(e_{k,l} \Scale[0.7]{\setminus \{ k,l \}} )}
     + V ) {\bf 1}_{\{ i \notin \{ k , l \} \} }
       \right)_{1\leq i \leq d} \right)
       \right]
   \\
   & \geq 
   E \left[ \phi
     \left(
     \left(
     X_{B_i \setminus B(e_{k,l})\setminus B(e_{k,l} \Scale[0.7]{\setminus \{ k,l \}} ) }
     + X_{A(e_{k,l})} + U {\bf 1}_{\{i \not= k \}}
     + X_{A(e_{k,l} \Scale[0.7]{\setminus \{ k,l \}} )}
     {\bf 1}_{\{ i \notin \{ k , l \} \} }
     + V {\bf 1}_{\{ i \not= l \}} 
       \right)_{1\leq i \leq d} \right)
       \right]
   \\
   & = 
   E \left[ \phi
     \left( \left( X_{A_i} \right)_{1\leq i \leq d} \right)
       \right], 
     \end{align*} 
 where we used \eqref{c1} for the last equality. 
 As for the above inequality, it follows from
 \begin{eqnarray*}
   \lefteqn{
     E \left[ \phi
     \left(
     U , U + V , \ldots , U+V , U ) 
     \right)        \right]
   }
   \\
   & = & 
   p^2  \phi
     \left(
     1 , 2 , \ldots , 2 , 1 
     \right)
       + q^2
       \phi
     \left(
     0, 0 , \ldots , 0 , 0  \right)
                 +
   pq \phi
     \left( 1 , 1 , \ldots , 1 , 1 \right)
       + pq 
     \phi
     \left( 0, 1 , \ldots , 1 , 0 \right)
   \\
   & \geq & 
   p^2  \phi
     \left(
     1 , 2 , \ldots , 2 , 1 
     \right)
       + q^2
       \phi
     \left(
     0, 0 , \ldots , 0 , 0  \right)
       + pq 
     \phi
     \left( 1, 1 , \ldots , 1 , 0 \right)
                 +
   pq \phi
     \left( 0 , 1 , \ldots , 1 , 1 \right)
        \\
     & = & 
     E \left[ \phi
     \left( U , U + V , \ldots , U+V , V \right)
       \right], 
\end{eqnarray*} 
 for all supermodular functions $\phi : \real^{|e_{k,l}|} \longrightarrow \real$,
 where $|e_{k,l}|$ denotes the cardinality of $e_{k,l}$
 whose indices are arranged as $\{ k,\ldots , l\}$
 for convenience of notation, 
 and we did not consider indices $j \notin e_{k,l}$
 as $U$ and $V$ do not belong to $X_j$ in this case. 
\end{proof}
\subsubsection*{Multivariate Gaussian vectors}
From the central limit theorem, 
Theorem~\ref{fjkdsfds} can be used to deal with centered
multivariate Gaussian random vectors
$(U_1,\ldots , U_d)$ and $(V_1,\ldots , V_d)$
with covariance matrices 
$$(\Cov ( U_i, U_j ))_{1\leq  i \leq  j \leq  d }
\quad \mbox{and} \quad 
(\Cov ( V_i, V_j ))_{1\leq  i \leq  j \leq  d }.
$$ 
 In this case we can apply the M\"obius inversion 
 \eqref{djhklsdf} 
 in order to determine the variance coefficients
 $(\sigma^2_{k,l})_{1\leq  k \leq  l \leq  d }$
 and 
 $(\eta^2_{k,l})_{1\leq  k \leq  l \leq  d }$
 in the decomposition \eqref{ui}.
 Those coefficients can then be  
 obtained as the respective limits of variances 
 $(\Var [ X^n_{k,l} /\sqrt{n} ] )_{1\leq  k \leq  l \leq  d }$
 and 
 $(\Var [ Y^n_{k,l} /\sqrt{n} ] )_{1\leq  k \leq  l \leq  d }$
 of independent binomial random variables such that
$$
 U^n_i := \frac{1}{\sqrt{n}}
 \sum_{ i \in e_{k,l} } ( X^n_{k,l} - E[ X^n_{k,l} ]) \quad \mbox{and} \quad 
 V^n_i := \frac{1}{\sqrt{n}}
 \sum_{ i \in e_{k,l} } ( Y^n_{k,l} - E[Y^n_{k,l}] ), \quad i =1,\ldots , d, 
$$ 
converge in distribution to 
$(U_1,\ldots , U_d)$ and $(V_1,\ldots , V_d)$
respectively.
Letting $n$ tend to infinity in $\Cov(U^n_i, U^n_j)\leq \Cov(V^n_i, V^n_j)$,
the condition $\Cov(U_i, U_j)\leq \Cov(V_i, V_j)$
of Theorem~\ref{fjkdsfds},
 $1 \leq i < j \leq d$, becomes necessary and sufficient
 for $(U_1,\ldots , U_d) \le_{\rm sm} (V_1,\ldots , V_d)$ 
 to hold. 
 In this way we recover the 
result of \cite{scarsini}, Theorem~4.2,
for Gaussian random vectors, 
cf. also Theorem~3.13.5 of \cite{mullerbk}.
\\

A similar argument results from Theorem~\ref{sufficiency} below
in the Poisson case, using
the convergence in distribution 
from renormalized binomial random variables
to Poisson random variables.  
In the next section we provide a proof
of such a result using L\'evy measures 
for infinitely divisible Poisson random vectors.
\section{Poisson random vectors}
\label{s2}
 Recall that any $d$-dimensional infinitely divisible
 Poisson random vector $X = (X_1,\ldots , X_d)$
 is defined by its characteristic function 
$$ 
 E [ e^{i\langle \bar{t} , X \rangle} ]
 = \exp \left( \int_{\real^d} ( e^{i\langle \bar{t} , x\rangle } - 1 ) \mu (dx) \right),$$
 where $\bar{t} = (t_1,\ldots , t_d) \in \real^d$, 
 $\langle \cdot , \cdot \rangle$ denotes the scalar product in $\real^d$,
 and the L\'evy measure 
 $$
 \mu (dx) :=
 \sum_{\emptyset \not= S \subset \{1,2,\ldots , d \} }
 a_S \delta_{e_S} (dx),
$$ 
 is supported on $C_d$, where $\delta_{e_S} (dx)$ denotes the
 Dirac measure at the point $e_S \in C_d$, and 
 $(a_S)_{\emptyset \not= S \subset \{1,2,\ldots , d \} }$
 is a family of nonnegative coefficients. 
 \\

 Equivalently, $X = (X_1,\ldots , X_d)$
 can be represented as
\begin{equation*}
 X_i
 = \sum_{ S \in \{ 0,1 \}^d \atop S \not= \emptyset }
 {\bf 1}_{\{ i \in S\} } X_S
 = \sum_{S \subset \{1,2,\ldots , d \} \atop S \ni i } X_S,
 \qquad
 i = 1,\ldots , d, 
\end{equation*}
 where $(X_S)_{\emptyset \not= S \subset \{1,2,\ldots , d \} }$
 is a family of $2^d -1$ independent Poisson random variables
 with respective intensities
 $(a_S)_{\emptyset \not= S \subset \{1,2,\ldots , d \} }$,
 cf. also Theorem~3 of \cite{kawamura}. 
\\
  
In order to characterize the ordering of Poisson random vectors
based on the data of their covariance matrices which contain
only $d(d+1)/2$ components,
we restrict ourselves to L\'evy measures of the form
\begin{equation}
\label{djkldd}
 \mu (dx) = \sum_{1 \leq  k \leq  l \leq  d} a_{k,l} \delta_{e_{k,l}} (dx),
\end{equation}
 on $C_d$, where $a_{k,l} \in \real_+$, $1\leq k \leq l \leq d$.
 In other words, we have 
 \begin{equation}
   \label{xp} 
X  = \sum_{i=1}^d e_i \sum_{ i \in e_{k,l} } X_{k,l}
= \sum_{1 \leq  k \leq l \leq  d} X_{k,l} e_{k,l}, 
 \end{equation}
 where $(X_{k,l})_{1\leq  k \leq  l \leq  d}$ is a family 
 of independent Poisson random variables whose respective
 intensity parameters $(a_{i,j})_{1\leq  i \leq  j \leq  d }$
 satisfy
 $\Var [ X_{k,l} ] = E[X_{k,l} ] = a_{k,l}$,
 $1\leq k \leq l \leq d$, with the inversion formula 
\begin{equation}
\label{djhklsdf.2} 
 a_{k,l} = 
 \sum_{e_{k,l} \preceq e_{i,j} }
 \mu ( e_{i,j} , e_{k,l} )
 \Cov ( X_i, X_j ) 
 , \qquad 1\leq k \leq l \leq d, 
\end{equation}
 that follows from \eqref{djhklsdf}. 
\subsubsection*{Supermodular ordering of Poisson random vectors}
Theorem~\ref{sufficiency} below is a direct consequence
of the following Lemma~\ref{djkld} which provides the
decomposition 
$$
 \mu ( dx )
 = 
 \sum_{i=1}^d \Var [X_i] \delta_{e_i}
 + \sum_{1 \leq i < j \leq  d } 
 \Cov(X_i, X_j)
 \left( \delta_{e_{ i,j }} + \delta_{e_{i,j} \Scale[0.7]{\setminus \{ i ,j\}}} 
 - \delta_{e_{i , j} \Scale[0.7]{\setminus \{ i \}} }
 - \delta_{e_{i,j} \Scale[0.7]{\setminus \{ j \}}  } 
 \right)
 $$ 
 of the L\'evy measure $\mu (dx)$ on $C_d \setminus \{0\}$,
 using the covariance matrix of $(X_i)_{i=1,\ldots ,d}$. 
\begin{lemma}
\label{djkld}
 For any function $\phi : C_d \longrightarrow \real$ such that
 $\phi ( 0 ) = 0$ we have 
\begin{eqnarray*} 
  \lefteqn{
   \! \! \! \! \!  \! \! \! \! \! 
    \int_{\real^d}
 \phi ( x ) \mu ( dx )
 = 
 \sum_{i=1}^d\EE[X_i]\phi(e_i )
  }
  \\
  & & 
\! \! \! \! \! \! \! \!  + 
 \sum_{1 \leq i < j \leq  d } 
 \Cov(X_i, X_j)
 ( \phi( e_{ i,j } ) + \phi( e_{i,j} \Scale[0.7]{\setminus \{ i ,j\}} ) 
 - \phi( e_{i , j} \Scale[0.7]{\setminus \{ i \}} )-\phi( e_{i,j}
 \Scale[0.7]{\setminus \{ j \}} ) )
. 
\end{eqnarray*} 
\end{lemma}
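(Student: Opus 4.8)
The plan is to turn the integral $\int_{\real^d}\phi(x)\mu(dx)$ into a finite sum over the tree nodes and then feed in the M\"obius inversion \eqref{djhklsdf.2}. Since by \eqref{djkldd} the L\'evy measure is carried by $\{e_{k,l}\ :\ 1\leq k \leq l \leq d\}$, none of which is $0$, one immediately has $\int_{\real^d}\phi(x)\mu(dx) = \sum_{1\leq k\leq l\leq d} a_{k,l}\phi(e_{k,l})$. Replacing $a_{k,l}$ by $\sum_{e_{k,l}\preceq e_{i,j}}\mu(e_{i,j},e_{k,l})\Cov(X_i,X_j)$ according to \eqref{djhklsdf.2} and interchanging the order of this finite double sum so as to bring $\Cov(X_i,X_j)$ to the front yields
$$
\int_{\real^d}\phi(x)\mu(dx) = \sum_{1\leq i \leq j \leq d}\Cov(X_i,X_j)\sum_{e_{k,l}\preceq e_{i,j}}\mu(e_{i,j},e_{k,l})\phi(e_{k,l}),
$$
the inner sum ranging over the tree nodes $e_{k,l}$ lying below $e_{i,j}$ for the order $\preceq$.

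Next I would evaluate, for each fixed $(i,j)$, this inner sum by means of the M\"obius values \eqref{a1}--\eqref{a4} together with the fact, recorded above, that $\mu(e_{i,j},e_{k,l}) = 0$ for every remaining tree node $e_{k,l}$. When $i<j$, only $e_{i,j}$, its two children $e_{i,j}\Scale[0.7]{\setminus \{ i \}}$, $e_{i,j}\Scale[0.7]{\setminus \{ j \}}$ and their common grandchild $e_{i,j}\Scale[0.7]{\setminus \{ i ,j\}}$ contribute, with respective weights $+1,-1,-1,+1$, so the inner sum equals $\phi(e_{i,j}) + \phi(e_{i,j}\Scale[0.7]{\setminus \{ i ,j\}}) - \phi(e_{i,j}\Scale[0.7]{\setminus \{ i \}}) - \phi(e_{i,j}\Scale[0.7]{\setminus \{ j \}})$; when $i=j$ the node $e_{i,i}=e_i$ is a leaf, its would-be child $e_{i,i}\Scale[0.7]{\setminus \{ i \}}=0$ is not a tree node, and the only surviving term is $\mu(e_i,e_i)\phi(e_i)=\phi(e_i)$. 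For the diagonal coefficients I would then use \eqref{fjkljljkl2} together with the fact that each $X_{k,l}$ is Poisson, giving $\Cov(X_i,X_i)=\sum_{i\in e_{k,l}}\Var[X_{k,l}]=\sum_{i\in e_{k,l}}a_{k,l}=\sum_{i\in e_{k,l}}\EE[X_{k,l}]=\EE[X_i]$. Collecting the $i=j$ and $i<j$ contributions then produces exactly the asserted identity, of which the decomposition of $\mu(dx)$ displayed before the lemma is merely the measure-valued reformulation (test against indicators of points of $C_d\setminus\{0\}$).

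The one point requiring attention — hardly an obstacle — is the degenerate ``diamond'' that occurs when $e_{i,j}$ consists of exactly the two indices $i$ and $j$, as in Example~$(ii)$: the common grandchild $e_{i,j}\Scale[0.7]{\setminus \{ i ,j\}}$ is then $0$, so it is not a tree node and does not appear in the inner sum, yet the hypothesis $\phi(0)=0$ makes the four-term formula above continue to hold verbatim (the missing term being $\phi(0)=0$ anyway). Apart from that, the whole argument is a rearrangement of finite sums resting on the M\"obius evaluations \eqref{a1}--\eqref{a4} and the inversion formula \eqref{djhklsdf.2} already established above.
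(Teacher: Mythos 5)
Your proof is correct and follows essentially the same route as the paper's: write the integral as $\sum_{k\le l}a_{k,l}\phi(e_{k,l})$, substitute the M\"obius inversion \eqref{djhklsdf.2}, interchange the finite double sum, and evaluate the inner sums via \eqref{a1}--\eqref{a4}, with $\Cov(X_i,X_i)=\EE[X_i]$ handling the diagonal. Your explicit treatment of the degenerate case where the grandchild equals $0$ (absorbed by the hypothesis $\phi(0)=0$) is a point the paper leaves implicit, but it is the same argument.
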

\begin{proof}
 By the M\"obius inversion formula \eqref{djhklsdf} 
 we have 
\begin{align*} 
&  \int_{\real^d}
 \phi ( x ) \mu ( dx )
  = 
 \sum_{1 \leq k \leq l \leq  d } a_{k,l}
 \phi(e_{k,l}) 
  \\
   & =  
 \sum_{1 \leq k \leq l \leq  d } 
 \phi(e_{k,l}) 
 \sum_{e_{k,l} \preceq e_{i,j} }
 \mu ( e_{i,j} , e_{k,l} )
 \Cov ( X_i, X_j ) 
 \\
   & =  
 \sum_{i=1}^d 
 \Cov ( X_i, X_i )
 \sum_{e_k \preceq e_i}
 \mu ( e_i , e_k )
 \phi(e_k) 
 +
 \sum_{1 \leq i < j \leq  d } 
 \Cov ( X_i, X_j )
 \sum_{e_{k,l} \preceq e_{i,j} \atop 1 \leq k < l \leq d}
 \mu ( e_{i,j} , e_{k,l} )
 \phi(e_{k,l}) 
 \\
  & =  
 \sum_{i=1}^d 
 E [X_i] \phi(e_i) 
 +
 \sum_{1 \leq i < j \leq  d } 
 \Cov(X_i, X_j)
 ( \phi( e_{i,j} ) + \phi(
     e_{i,j} \Scale[0.7]{\setminus \{ i,j \}} ) 
     - \phi( e_{i,j } \Scale[0.7]{\setminus \{ i \}} )
     - \phi( e_{i,j } \Scale[0.7]{\setminus \{ j \}} )), 
\end{align*} 
where we used \eqref{a1}-\eqref{a4} and the fact that
$e_k \preceq e_i$ if and only if $k=i$. 
\end{proof}
 Consider now two Poisson random vectors $X$ and $Y$ whose
 respective L\'evy measures $\mu$ and $\nu$ are represented as
$$
 \mu (dx) = \sum_{1 \leq  i \leq  j \leq  d} a_{i,j} \delta_{e_{i,j}} (dx)
 \quad
 \mbox{and}
 \quad
 \nu (dx) = \sum_{1 \leq  i \leq  j \leq  d} b_{i,j} \delta_{e_{i,j}} (dx),
$$
 as in \eqref{djkldd}.
 If $X_i$ has the same distribution as $Y_i$ for all $i=1,\ldots , d$
 then $E[X_i] = E[Y_i]$, $i=1,\ldots , d$, and
 Lemma~\ref{djkld} shows that 
\begin{eqnarray}
\label{djkldd1}
\lefteqn{
 \int_{\real^d}
 \phi ( y ) \nu ( dy )
 - 
 \int_{\real^d}
 \phi ( x ) \mu ( dx )
}
\\
\nonumber
 & = &
 \sum_{1\leq i < j \leq d}
 ( \Cov(Y_i, Y_j) - \Cov(X_i, X_j) )
  ( \phi( e_{i,j} ) + \phi(  e_{i,j} \Scale[0.7]{\setminus \{ i,j \}} ) 
 -\phi( e_{i,j} \Scale[0.7]{\setminus \{ i \}} )
 - \phi( e_{i,j} \Scale[0.7]{\setminus \{ j \}} ) )
. 
\end{eqnarray}
Relation~\eqref{djkldd1} shows in particular
that the nonnegativity of the coefficients
\begin{equation}
\label{cvy}
 \Cov(Y_i, Y_j) - \Cov(X_i, X_j) \geq 0, \qquad 1\leq i < j \leq d,
\end{equation}
 becomes a necessary and sufficient
 condition for the supermodular ordering of the
 (finite support) L\'evy measures $\mu$ and $\nu$. 
\\

The next Theorem~\ref{sufficiency} reformulates \eqref{cvy}
 as a necessary and sufficient condition for supermodular ordering
 of infinitely divisible Poisson random vector, 
 based on Theorem~4.5 of \cite{bauerle} which allows us to
 carry over the notion of supermodularity from the finite support
 setting of L\'evy measures $\mu$, $\nu$ on the cube $C_d$, to the
 infinite support setting of Poisson random variables.
\begin{theorem}
\label{sufficiency}
 Consider two Poisson random vectors $X$ and $Y$ both represented as
 in \eqref{xp}.
 Then the conditions
\begin{equation*}
\EE[X_i]=\EE[Y_i],
 \qquad
 1 \leq  i \leq  d,
\end{equation*}
 and 
\begin{equation}
\label{con2}
\Cov(X_i, X_j)\leq \Cov(Y_i, Y_j),
 \qquad
 1 \leq i < j \leq d,
\end{equation}
 are necessary and sufficient for the
 supermodular ordering $X \le_{\rm sm} Y$.
\end{theorem}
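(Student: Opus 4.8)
The plan is to obtain necessity from the general theory of the supermodular order, and to reduce sufficiency to a comparison of the two L\'evy measures on $C_d$ which is then lifted to the random vectors by \cite{bauerle}. Necessity is exactly as in Theorem~\ref{fjkdsfds}: by Theorem~3.9.5 of \cite{mullerbk}, $X \le_{\rm sm} Y$ forces $X_i$ and $Y_i$ to have the same distribution (hence $\EE[X_i]=\EE[Y_i]$) and forces concordance of second moments, which together with the equality of the means gives \eqref{con2}. So the substance is the sufficiency direction.

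For sufficiency, the first point is a local convexity observation about the four vertices occurring in Lemma~\ref{djkld}. Since the tree node $e_{i,j}$ contains both indices $i$ and $j$, its two children $e_{i,j}\setminus\{i\}$ and $e_{i,j}\setminus\{j\}$ are incomparable in $C_d$ with least upper bound $e_{i,j}$ and greatest lower bound $e_{i,j}\setminus\{i,j\}$. Applying the supermodularity inequality to $x=e_{i,j}\setminus\{i\}$ and $y=e_{i,j}\setminus\{j\}$, viewed as $\{0,1\}$-vectors in $\real^d$, yields
$$
\phi(e_{i,j})+\phi(e_{i,j}\setminus\{i,j\})-\phi(e_{i,j}\setminus\{i\})-\phi(e_{i,j}\setminus\{j\})\ \ge\ 0
$$
for every supermodular $\phi:\real^d\to\real$ (whose restriction to the sublattice $C_d$ is again supermodular). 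Feeding this into relation~\eqref{djkldd1}, which holds because $\EE[X_i]=\EE[Y_i]$, and using \eqref{con2} so that each coefficient $\Cov(Y_i,Y_j)-\Cov(X_i,X_j)$ is nonnegative, we get
$$
\int_{\real^d}\phi\,d\mu\ \le\ \int_{\real^d}\phi\,d\nu
$$
for every supermodular $\phi$ with $\phi(0)=0$; adding a constant to $\phi$ changes neither side of the inequality to be proved, so this normalization costs nothing. In other words, the L\'evy measures satisfy $\mu\le_{\rm sm}\nu$.

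It then remains to pass from the supermodular ordering of the finite, $C_d$-supported L\'evy measures to the supermodular ordering of the infinitely divisible Poisson vectors $X=\sum_{1\le k\le l\le d}X_{k,l}e_{k,l}$ and $Y=\sum_{1\le k\le l\le d}Y_{k,l}e_{k,l}$ themselves. This is precisely what Theorem~4.5 of \cite{bauerle} provides: supermodular ordering of the L\'evy measures, together with agreement of the remaining characteristics (here no Gaussian component, and equal first moments, i.e. equal drift, forced by $\EE[X_i]=\EE[Y_i]$), implies $X\le_{\rm sm}Y$. An alternative is to approximate each independent Poisson coordinate by a binomial one so as to put the vectors in the form \eqref{x1}, apply Theorem~\ref{fjkdsfds} to the approximations under the (limiting) conditions \eqref{con11}--\eqref{con22}, and pass to the limit using convergence in distribution.

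I expect the only genuinely delicate step to be this last transfer. Deriving $\mu\le_{\rm sm}\nu$ is bookkeeping around Lemma~\ref{djkld}, but upgrading an inequality that has only been tested on the $2^d$ vertices of $C_d$ to $\EE[\Phi(X)]\le\EE[\Phi(Y)]$ for all sufficiently integrable supermodular $\Phi$ on $\real^d$ — controlling the integrability of possibly unbounded $\Phi$ against the Poisson laws, and dealing with the unbounded support of the Poisson coordinates — is exactly the work that must be imported from \cite{bauerle}, and it is where one has to verify carefully that the hypotheses of Theorem~4.5 there are met.
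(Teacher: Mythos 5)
Your proof is correct and follows essentially the same route as the paper: necessity from the standard marginal/concordance properties of $\le_{\rm sm}$, and sufficiency by using Lemma~\ref{djkld} to write $\int\phi\,d\nu-\int\phi\,d\mu$ as a nonnegative combination of the supermodularity increments $\phi(e_{i,j})+\phi(e_{i,j}\Scale[0.7]{\setminus\{i,j\}})-\phi(e_{i,j}\Scale[0.7]{\setminus\{i\}})-\phi(e_{i,j}\Scale[0.7]{\setminus\{j\}})\ge 0$, then transferring the ordering of the L\'evy measures to the Poisson vectors via Theorem~4.5 of \cite{bauerle}. Your explicit verification that $e_{i,j}\Scale[0.7]{\setminus\{i\}}\vee e_{i,j}\Scale[0.7]{\setminus\{j\}}=e_{i,j}$ and $e_{i,j}\Scale[0.7]{\setminus\{i\}}\wedge e_{i,j}\Scale[0.7]{\setminus\{j\}}=e_{i,j}\Scale[0.7]{\setminus\{i,j\}}$, and the remark on normalizing $\phi(0)=0$, make explicit two small points the paper leaves implicit.
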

\begin{proof}
 By Theorem~4.5 in \cite{bauerle} it suffices to show that
\begin{equation} 
 \label{fdjklaa}
 \int_{\real^d}
 \phi (x) \mu (dx)
 \le
 \int_{\real^d}
 \phi (y) \nu (dy)
 \end{equation}
   for all supermodular functions
 $\phi : \real^d \longrightarrow \real$,
 where $\mu (dx)$ and $\nu (dy)$ respectively
 denote the L\'evy measures of $X$ and $Y$.
 By Lemma~\ref{djkld} we have the identity
\begin{eqnarray}
\nonumber
\lefteqn{
 \int_{\real^d}
 \phi ( y ) \nu ( dy )
 -
 \int_{\real^d}
 \phi ( x ) \mu ( dx )
}
\\
\nonumber
 & = &
 \sum_{1\leq i< j\leq d} ( \Cov(Y_i, Y_j) - \Cov(X_i, X_j) ) 
 ( \phi( e_{i,j} ) + \phi(  e_{i,j} \Scale[0.7]{\setminus \{ i,j \}} ) 
 -\phi( e_{ i ,j } \Scale[0.7]{\setminus \{ i \}} )
   -
   \phi( e_{ i,j} \Scale[0.7]{\setminus \{ j \}} ) )
\end{eqnarray} 
 under condition~\eqref{con2}, which allows us to conclude to 
 \eqref{fdjklaa} for all supermodular functions $\phi$.
\end{proof}
The next proposition is obtained as in Proposition~4.3 of \cite{kp}
by extending Theorem~4.5 of \cite{bauerle}
to nondecreasing supermodular functions
$\phi$ on $\real^d$ satisfying $\phi (0) = 0$,
using the same approximation as in Lemma~4.4 therein.
\begin{prop}
\label{djklaaqe}
 Consider two Poisson random vectors $X$ and $Y$ both represented as
 in \eqref{xp}, and assume that
$$
 \EE[X_i] \leq \EE[Y_i],
 \qquad
 1 \leq  i \leq  d,
$$
 and
$$
\Cov(X_i, X_j)\leq \Cov(Y_i, Y_j),
 \qquad
 1 \leq i < j \leq d.
$$
 Then we have
$$
 \EE[\Phi(X)] \leq \EE[\Phi(Y)]$$
 for all {\em nondecreasing} supermodular functions
 $\Phi : \real^d \longrightarrow \real$.
\end{prop}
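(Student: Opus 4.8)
\textbf{Proof proposal for Proposition~\ref{djklaaqe}.}

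The plan is to mimic the argument of Theorem~\ref{sufficiency}, replacing the role of Theorem~4.5 of \cite{bauerle} by its extension to nondecreasing supermodular test functions, exactly as Proposition~4.3 of \cite{kp} handles the analogous situation for the dependence structure of \cite{kp}. First I would record that it suffices to prove the integral inequality
$$
\int_{\real^d} \Phi(x)\, \mu(dx) \leq \int_{\real^d} \Phi(y)\, \nu(dy)
$$
for all nondecreasing supermodular $\Phi : \real^d \longrightarrow \real$ with $\Phi(0) = 0$, where $\mu$ and $\nu$ are the L\'evy measures of $X$ and $Y$ as in \eqref{djkldd}; the reduction to $\Phi(0) = 0$ is harmless since adding a constant does not affect the ordering, and the passage from this integral inequality to $\EE[\Phi(X)] \leq \EE[\Phi(Y)]$ for all nondecreasing supermodular $\Phi$ is the content of the cited extension of Theorem~4.5 of \cite{bauerle}, obtained via the approximation in Lemma~4.4 therein.

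Next I would compute, by Lemma~\ref{djkld} applied to both $\mu$ and $\nu$ (note $\Phi|_{C_d}$ plays the role of the function $\phi$, and $\Phi(0)=0$), the difference
\begin{align*}
& \int_{\real^d} \Phi(y)\, \nu(dy) - \int_{\real^d} \Phi(x)\, \mu(dx)
\\
& = \sum_{i=1}^d (\EE[Y_i] - \EE[X_i])\, \Phi(e_i)
\\
& \quad + \sum_{1\leq i < j \leq d} (\Cov(Y_i,Y_j) - \Cov(X_i,X_j)) \big( \Phi(e_{i,j}) + \Phi(e_{i,j}\Scale[0.7]{\setminus\{i,j\}}) - \Phi(e_{i,j}\Scale[0.7]{\setminus\{i\}}) - \Phi(e_{i,j}\Scale[0.7]{\setminus\{j\}}) \big).
\end{align*}
The two hypotheses say that every scalar coefficient $\EE[Y_i] - \EE[X_i]$ and $\Cov(Y_i,Y_j) - \Cov(X_i,X_j)$ is nonnegative, so it remains to check that the corresponding bracketed values of $\Phi$ are all nonnegative. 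For the diagonal terms this is immediate: $\Phi(e_i) = \Phi(e_i) - \Phi(0) \geq 0$ because $\Phi$ is nondecreasing, $\Phi(0) = 0$, and $e_i \succeq 0$. For the off-diagonal terms, observe that $e_{i,j}\Scale[0.7]{\setminus\{i\}}$ and $e_{i,j}\Scale[0.7]{\setminus\{j\}}$ are the two children of $e_{i,j}$ and their componentwise minimum is $e_{i,j}\Scale[0.7]{\setminus\{i,j\}}$ while their componentwise maximum is $e_{i,j}$; hence supermodularity of $\Phi$ gives
$$
\Phi(e_{i,j}\Scale[0.7]{\setminus\{i\}}) + \Phi(e_{i,j}\Scale[0.7]{\setminus\{j\}}) \leq \Phi(e_{i,j}) + \Phi(e_{i,j}\Scale[0.7]{\setminus\{i,j\}}),
$$
which is exactly the nonnegativity of the bracket. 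Combining, the whole difference is a sum of products of nonnegative numbers, hence nonnegative, establishing the integral inequality.

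The only genuinely new ingredient beyond Theorem~\ref{sufficiency} is the extension of the transfer result of \cite{bauerle} from supermodular to \emph{nondecreasing} supermodular functions on $\real^d$ with $\Phi(0)=0$; this is precisely the point where the monotonicity hypothesis on the means is used, and the paper explicitly defers it to the argument of \cite{kp} (Proposition~4.3 and Lemma~4.4). I expect this to be the main obstacle in the sense that it is the only step not reducible to the elementary M\"obius/supermodularity bookkeeping above — one must verify that the monotone cone of test functions is still generated, under the relevant approximation, by indicators whose expectations under a compound-Poisson law are controlled by the L\'evy-measure integrals, so that the finite-support inequality lifts to the infinitely divisible setting. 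Everything else is a direct transcription of the proof of Theorem~\ref{sufficiency} with the extra $\sum_i (\EE[Y_i]-\EE[X_i])\Phi(e_i) \geq 0$ term carried along.
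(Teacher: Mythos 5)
Your proposal is correct and follows exactly the route the paper intends: the paper gives only a one-sentence pointer (reduce to the L\'evy-measure inequality via the extension of Theorem~4.5 of \cite{bauerle} to nondecreasing supermodular functions as in Proposition~4.3 and Lemma~4.4 of \cite{kp}), and your computation via Lemma~\ref{djkld} — keeping the extra term $\sum_i (\EE[Y_i]-\EE[X_i])\Phi(e_i)\geq 0$ and checking the supermodular bracket with $e_{i,j}\Scale[0.7]{\setminus\{i\}} \vee e_{i,j}\Scale[0.7]{\setminus\{j\}} = e_{i,j}$, $e_{i,j}\Scale[0.7]{\setminus\{i\}} \wedge e_{i,j}\Scale[0.7]{\setminus\{j\}} = e_{i,j}\Scale[0.7]{\setminus\{i,j\}}$ — is precisely the intended filling-in of that sketch. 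You also correctly identify the transfer from the finite-support L\'evy-measure inequality to the random vectors as the only genuinely nontrivial (and deferred) ingredient.
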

\subsubsection*{Sums of binomial, Gaussian and Poisson vectors}
 By Theorem~4.2 of \cite{scarsini} on Gaussian random vectors,
 Theorems~\ref{fjkdsfds} and \ref{sufficiency} above,
 and the fact that the supermodular ordering is closed
 under convolution, cf. Theorem~3.9.14-(C) of \cite{mullerbk},
 deduce that the supermodular ordering
 of a sum of independent binomial, Gaussian and Poisson vectors,
 is implied by the componentwise ordering of their respective
 covariances. 
 Proposition~\ref{djklaaqe} admits an analog extension to
 sums of binomial, Gaussian and Poisson random vectors.
\subsubsection*{Convex ordering}
\begin{prop}
\label{djklasasaa}
 Consider two Poisson random vectors $X$ and $Y$ both represented as in
 \eqref{xp}.
 Then we have $X \le_{\rm cx} Y$ if and only if $X$ and $Y$ have
 same distributions.
\end{prop}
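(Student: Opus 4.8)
The plan is to prove the non-trivial implication, namely that $X\le_{\rm cx}Y$ forces $X$ and $Y$ to have the same law, the converse being obvious. So assume $X\le_{\rm cx}Y$, i.e. $\EE[\phi(X)]\le\EE[\phi(Y)]$ for every sufficiently integrable convex $\phi:\real^d\to\real$.

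First I would read off the first two moments. Applying the convex order to the two affine functions $x\mapsto x_i$ and $x\mapsto -x_i$ gives $\EE[X_i]=\EE[Y_i]$ for $i=1,\ldots,d$. Since each component $X_i=\sum_{i\in e_{k,l}}X_{k,l}$ is a finite sum of independent Poisson random variables, $X_i$ is itself Poisson, so $\Var[X_i]=\EE[X_i]$, and likewise for $Y_i$; hence $\Var[X_i]=\Var[Y_i]$ for all $i$. Next, for any $t\in\real^d$ the map $x\mapsto\langle t,x\rangle^2$ is convex and square-integrable against both laws (Poisson vectors have finite moments of all orders), so $\EE[\langle t,X\rangle^2]\le\EE[\langle t,Y\rangle^2]$; subtracting the equal squared means yields $\Var[\langle t,X\rangle]\le\Var[\langle t,Y\rangle]$. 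Thus the symmetric matrix $M:=\big(\Cov(Y_i,Y_j)-\Cov(X_i,X_j)\big)_{1\le i,j\le d}$ satisfies $t^\top M t\ge 0$ for all $t$, i.e. $M\succeq 0$, while its diagonal vanishes by the previous step. Testing $t=e_i\pm e_j$ then forces $M_{ij}=0$ for $i\ne j$, so $M=0$: the covariance matrices of $X$ and $Y$ coincide.

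To conclude I would invoke the M\"obius inversion formula \eqref{djhklsdf.2}, by which the intensities $(a_{k,l})_{1\le k\le l\le d}$ of the representation \eqref{xp} of $X$ (and likewise $(b_{k,l})_{1\le k\le l\le d}$ for $Y$) are determined by the entries of the covariance matrix alone. Hence $a_{k,l}=b_{k,l}$ for all $k\le l$, so $X$ and $Y$ share the same L\'evy measure \eqref{djkldd}, equivalently the same family of independent Poisson summands, and therefore the same distribution.

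I do not expect a genuine difficulty here; the argument is a rigidity phenomenon and every step is short. The one point deserving care is the reason the diagonal of $M$ vanishes: this is exactly where the Poisson identity $\Var[X_i]=\EE[X_i]$ enters, converting the merely one-sided covariance information provided by the convex order into equality by combining it with positive semidefiniteness of $M$.
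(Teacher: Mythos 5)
Your proof is correct, but it takes a genuinely different route from the paper's. The paper fixes a pair $k<l$ and constructs an explicit piecewise-linear convex test function $\phi_{k,l}(x)=\max\bigl(0,\,x_l-x_k-\sum_{a\notin e_{k,l}}x_a\bigr)$ adapted to the tree; it evaluates this function against the two L\'evy measures via Lemma~\ref{djkld} and invokes the argument of part $(b)$ of Theorem~4.5 of \cite{bauerle} to transfer the convex ordering of the vectors to an inequality between L\'evy-measure integrals, obtaining $\Cov(Y_k,Y_l)\le\Cov(X_k,X_l)$; the reverse inequality comes from a symmetric test function. You instead work entirely at the level of moments: affine test functions give equality of means, the Poisson marginal identity $\Var[X_i]=\EE[X_i]$ pins the diagonal of $M:=\bigl(\Cov(Y_i,Y_j)-\Cov(X_i,X_j)\bigr)_{1\le i,j\le d}$ to zero, quadratic test functions $x\mapsto\langle t,x\rangle^2$ give $t^\top Mt\ge 0$, and a positive semidefinite matrix with vanishing diagonal is the zero matrix. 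Both arguments then conclude identically via the M\"obius inversion \eqref{djhklsdf.2} on the common tree. Your version is more elementary and self-contained -- it needs neither Lemma~\ref{djkld} nor the transfer result from \cite{bauerle}, and it isolates exactly where the Poisson structure enters (the variance--mean identity) -- whereas the paper's version stays within the L\'evy-measure framework used for the supermodular results and exhibits concrete convex functionals that detect any covariance discrepancy. One caveat shared by both proofs: deducing equality of laws from equality of covariance matrices uses that $X$ and $Y$ are built on the \emph{same} binary tree, since, as noted in Example~$(iv)$ of Section~\ref{ds}, the representation is not unique across trees.
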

\begin{proof}
 Assume $X \le_{\rm cx} Y$, i.e. we have
$$
 \EE[\Phi(X)] \leq \EE[\Phi(Y)]$$
 for all {\em convex} functions
 $\Phi : \real^d \longrightarrow \real$. 
 Clearly, this implies $E[X_k] = E[Y_k]$, $k=1,\ldots ,d$.
 Next, choosing any $1 \leq k < l \leq d$
 we check that the function
$$
 ( x_1,\ldots , x_d ) \mapsto
 \phi_{k,l} ( x_1,\ldots , x_d ) := \max \left(
 0 ,
 x_l - x_k - \sum_{ a \notin e_{k,l}} x_a
 \right)
$$
 is convex on $\real^d$, with
 $\phi_{k,l} ( e_{i,j} ) = 1$
 when $e_{i,j}$ is a (non-strict) descendant of
 $e_{k,l} \Scale[0.7]{\setminus \{ k \}}$
 that contains $l$, 
 and $\phi_{k,l} ( e_{i,j} ) = 0$ in all other cases.
 This yields 
$$
 \phi_{k,l} (e_{k,l}) + \phi_{k,l} (e_{k,l} \Scale[0.7]{\setminus \{ k , l \}} )
 - \phi_{k,l} (e_{k,l}\Scale[0.7]{\setminus \{ k \}} ) - \phi_{k,l} (e_{k,l}\Scale[0.7]{\setminus \{ l \}} ) = -1,
$$
 and 
$$
 \phi_{k,l} (e_{i,j}) + \phi_{k,l} (e_{i,j} \Scale[0.7]{\setminus \{ i,j \}} ) - \phi_{k,l} (e_{i,j}\Scale[0.7]{\setminus \{ i \}} ) - \phi_{k,l} (e_{i,j}\Scale[0.7]{\setminus \{ j \}} ) = 0
$$
 when $(i,j) \not=(k,l)$.
 Hence by Lemma~\ref{djkld}, the condition
 $\Cov(Y_k, Y_l ) > \Cov(X_k, X_l )$ would imply 
\begin{eqnarray*}
\lefteqn{
 \int_{\real^d}
 \phi ( y ) \nu ( dy )
 -
 \int_{\real^d}
 \phi ( x ) \mu ( dx )
}
\\
\nonumber
 & = &
\sum_{1\leq i< j\leq d}
( \Cov(Y_i, Y_j) - \Cov(X_i, X_j) ) 
 ( \phi( e_{i,j} ) + \phi(  e_{i,j} \Scale[0.7]{\setminus \{ i,j \}} ) 
 -\phi( e_{ i ,j } \Scale[0.7]{\setminus \{ i \}} )
   -
   \phi( e_{ i,j} \Scale[0.7]{\setminus \{ j \}} ) )
\\
\nonumber
 & = &
( \Cov(Y_k, Y_l) - \Cov(X_k, X_l) ) 
 ( \phi( e_{k,l} ) + \phi(  e_{k,l} \Scale[0.7]{\setminus \{ k,l \}} ) 
 -\phi( e_{ k ,l } \Scale[0.7]{\setminus \{ k \}} )
   -
   \phi( e_{ k,l} \Scale[0.7]{\setminus \{ l \}} ) )
\\
 & < & 0,
\end{eqnarray*} 
which would contradict $X \le_{\rm cx} Y$ by the same argument
as in part $(b)$ of the proof of Theorem~4.5 in \cite{bauerle}.
 Hence we have $\Cov(Y_k, Y_l ) \leq \Cov(X_k, X_l )$, and
 proceeding similarly by exchanging $k$ and $l$ with the convex function
$$
 ( x_1,\ldots , x_d ) \mapsto
 \phi_{k,l} ( x_1,\ldots , x_d ) := \max \left(
 0 ,
 x_k - x_l + \sum_{ a \notin e_{k,l}} x_a
 \right)
$$
 we deduce that $\Cov(Y_k, Y_l) = \Cov(X_k, X_l)$
 for all $1 \leq k < l \leq d$, hence $X$ and $Y$ have same distribution
 from \eqref{djhklsdf.2}. 
\end{proof}

\footnotesize

\def\cprime{$'$} \def\polhk#1{\setbox0=\hbox{#1}{\ooalign{\hidewidth
  \lower1.5ex\hbox{`}\hidewidth\crcr\unhbox0}}}
  \def\polhk#1{\setbox0=\hbox{#1}{\ooalign{\hidewidth
  \lower1.5ex\hbox{`}\hidewidth\crcr\unhbox0}}} \def\cprime{$'$}

\end{document}